\numberwithin{equation}{section}
\newenvironment{breakablealgorithm}
{
	\begin{center}
		\refstepcounter{algorithm}
		\hrule height.8pt depth0pt \kern2pt
		\renewcommand{\caption}[2][\relax]{
			{\raggedright\textbf{\ALG@name~\thealgorithm} ##2\par}%
			\ifx\relax##1\relax 
			\addcontentsline{loa}{algorithm}{\protect\numberline{\thealgorithm}##2}%
			\else 
			\addcontentsline{loa}{algorithm}{\protect\numberline{\thealgorithm}##1}%
			\fi
			\kern2pt\hrule\kern2pt
		}
	}{
		\kern2pt\hrule\relax
	\end{center}
}
\newcommand{\wt}{\widetilde}
\newtheorem{theorem}{Theorem}[section]
\newtheorem{lemma}[theorem]{Lemma}
\title{Elastic wave propagation in curvilinear coordinates with mesh refinement interfaces by a fourth order finite difference method}
\begin{document}
\author{Lu Zhang\thanks{Department of Applied Physics and Applied Mathematics, Columbia University, New York, NY 10027, USA. \href{mailto:lz2784@columbia.edu}{Email: lz2784@columbia.edu}}\and Siyang Wang\thanks{Department of Mathematics and Mathematical Statistics, Umeå University, Umeå, Sweden. \href{mailto:siyang.wang@umu.se}{Email: siyang.wang@umu.se} Previous affiliation: Division of Applied Mathematics, UKK, M\"alardalen University, V\"aster\aa s, 
    Sweden. }\and N. Anders
  Petersson\thanks{Center for Applied Scientific Computing, Lawrence Livermore National Laboratory,
    Livermore, CA 94551, USA. \href{mailto:petersson1@llnl.gov}{Email: petersson1@llnl.gov}}}
\maketitle

\begin{abstract}
We develop a fourth order accurate finite difference method for the three dimensional elastic wave equation in isotropic media with the piecewise smooth material property. In our model, the material property can be discontinuous at curved interfaces. The governing equations are discretized in second order form on curvilinear meshes by using a fourth order finite difference operator satisfying a summation-by-parts property. The method is energy stable and high order accurate. The highlight is that mesh sizes can be chosen according to the velocity structure of the material so that computational efficiency is improved. At the mesh refinement interfaces with hanging nodes, physical interface conditions are imposed by using ghost points and interpolation.  With a fourth order predictor-corrector time integrator, the fully discrete scheme is energy conserving. Numerical experiments are presented to verify the fourth order convergence rate and the energy conserving property. 
\end{abstract}

\textbf{Keywords}: Elastic wave equations, Three space dimension,  Finite difference methods, Summation-by-parts, Non-conforming mesh refinement

\textbf{AMS subject }:    65M06, 65M12

\section{Introduction}
Seismic wave propagation has important applications in earthquake simulation, energy resources exploration, and underground motion analysis. In many practical problems, wave motion is governed by the three dimensional (3D) anisotropic elastic wave equations. The layered structure of the Earth gives rise to a piecewise smooth material property with discontinuities at internal interfaces, which are often curved in realistic models. Because of the heterogeneous material property and internal interfaces, the governing equations cannot be solved analytically, and it is necessary to use advanced numerical techniques to solve the seismic wave propagation problem.

When solving hyperbolic partial differential equations (PDEs), for computational efficiency, it is essential that the numerical methods are high order accurate (higher than second order). This is because high order methods have much smaller dispersion error than lower order methods \cite{Hagstrom2012, Kreiss1972}. However, it is challenging to obtain a stable and high order accurate method in the presence of discontinuous material property and non-trivial geometry. 

Traditionally, the governing equations of seismic wave propagation are solved as a first order system, either in velocity-strain or velocity-stress formulation, which consists of nine equations. With the finite difference method, staggered grids are often used for first order systems, and recently the technique has been generalized to staggered curvilinear grids for the wave equation \cite{OReilly2020}. The finite difference method on non-staggered grids has also been developed for seismic wave simulation in 2D \cite{Kozdon2013} and 3D \cite{Duru2016}.

In this paper, we use another approach that discretizes the governing equations in second order form. Comparing with nine PDEs in a first order system, the second order formulation consists of only three PDEs in the displacement variables. In many cases, this could be a more efficient approach in terms of accuracy and memory usage. For spatial discretization, we consider the finite difference operators constructed in \cite{sjogreen2012fourth} that satisfy a summation-by-parts (SBP) principle, which is a discrete analog of the integration-by-parts principle and is an important ingredient to obtain energy stability. The SBP operators in \cite{sjogreen2012fourth} use a ghost point outside each boundary to impose boundary conditions strongly. The ghost point values are obtained by solving a system of linear equations. This can be avoided by imposing boundary conditions in a weak sense \cite{Carpenter1994} with the SBP operators constructed in \cite{Mattsson2012} that do not use any ghost point. The close relationship between these two types of SBP operators is explored in \cite{wang2018fourth}, where it was also shown in test problems that the approach using ghost points has better CFL property.

In the SBP finite difference framework, a multi-block approach is often taken when the material property is discontinuous. That is, the domain is divided into subdomains such that the internal interfaces are aligned with the material discontinuities. Each subdomain has four sides in 2D and six faces in 3D, which can then be mapped to a reference domain, for example, a unit square in 2D and a unit cube in 3D. In each subdomain, material properties are smooth and SBP operators are used independently for the spatial discretization of the governing equations. To patch subdomains together, physical interface conditions are imposed at internal interfaces \cite{Almquist2019,duru2014stable}. It is challenging to derive energy stable interface coupling with high order accuracy. 

In \cite{petersson2015wave}, a fourth order SBP finite difference method was developed to solve the 3D elastic wave equation in heterogeneous smooth media, where topography in non-rectangular domains is resolved by using curvilinear meshes. The main objective of the present paper is to develop a fourth order method that solves the governing equations in piecewise smooth media, where material discontinuities occur at curved interfaces.   This is motivated by the fact that in realistic models, material properties are only piecewise smooth with discontinuities, and it is important to obtain high order accuracy even at the material interfaces. A highlight of our method is that mesh sizes in each subdomain can be chosen according to the velocity structure of the material property. This leads to difficulties in mesh refinement interfaces, but maximizes computational efficiency. In the context of seismic wave propagation, as going deeper in the Earth, the wave speed gets larger and the wavelength gets longer. Correspondingly, in our model, the mesh becomes coarser with increasing depth. In this way, the number of grid points per wavelength can be kept almost the same in the entire domain. In addition, curved interfaces are also useful when the top surface has a very complicated geometry. If only planar interfaces are used \cite{SW4}, the size of the finest mesh block on top must be large to keep small skewness of the grid. With curved interfaces, the size of the finest mesh block can be reduced without increasing the skewness of the grid. 

In \cite{wang2018fourth}, we developed a fourth order finite difference method for the 2D wave equations with mesh refinement interfaces on Cartesian grids. Our current work generalizes to 3D elastic wave equations on curvilinear grids. In a 3D domain, the material interfaces are 2D curved faces. To impose interface conditions on hanging nodes, we construct fourth order interpolation and restriction operators for 2D grid functions. These operators are compatible with the underlying finite difference operators. With a fourth order predictor-corrector time integrator, the fully discrete discretization is energy conserving. 

The rest of the paper is organized as follows. In Sec.~2, we introduce the governing equations in curvilinear coordinates. The spatial discretization is presented in detail in Sec.~3. Particular emphasis is placed on the numerical coupling procedure at curved mesh refinement interfaces. In Sec.~4, we describe the temporal discretization and present the fully discrete scheme. Numerical experiments are presented in Sec.~5 to verify the convergence rate of the proposed scheme and the energy conserving property. We also demonstrate that the mesh refinement interfaces do not introduce spurious wave reflections. Conclusions are drawn in Sec.~6.

\section{The anisotropic elastic wave equation}
We consider the time dependent anisotropic elastic wave equation in a three dimensional domain ${\bf x}\in\Omega$, where ${\bf x} = (x^{(1)},x^{(2)},x^{(3)})^T$ are the Cartesian coordinates. The domain $\Omega$ is partitioned into two subdomains $\Omega^f$ and $\Omega^c$, with an interface $\Gamma = \Omega^f\cap\Omega^c$. The material property is assumed to be smooth in each subdomain, but may be discontinuous at the interface $\Gamma$. Without loss of generality, we may assume that the wave speed is slower in $\Omega^f$ than in $\Omega^c$, which motivates us to use a fine mesh in $\Omega^f$ and a coarse mesh in $\Omega^c$. We further assume that both $\Omega^f$ and $\Omega^c$ have six, possibly curved boundary faces. Denote ${\bf r} = (r^{(1)},r^{(2)},r^{(3)})^T$, the parameter coordinates, and  introduce smooth one-to-one mappings 
\begin{equation}\label{mapping}
{\bf x}= {\bf X}^{f}({\bf r}) :  [0,1]^3 \rightarrow \Omega^{f} \subset \mathbb{R}^3 \ \ \ \mbox{and} \ \ \ {\bf x} = {\bf X}^{c}({\bf r}) :  [0,1]^3 \rightarrow \Omega^{c} \subset \mathbb{R}^3.
\end{equation}
Let the inverse of the mappings in (\ref{mapping}) be ${\bf r} = {\bf R}^f({\bf x})$ with components ${\bf R}^f({\bf x}) = (R^{f,(1)}, R^{f,(2)}, R^{f,(3)})^T$ and ${\bf r} = {\bf R}^c({\bf x})$ with components ${\bf R}^c({\bf x}) = (R^{c,(1)}, R^{c,(2)}, R^{c,(3)})^T$, respectively. Note that we do not compute the components of the inverse mapping ${\bf R}^c$ and ${\bf R}^f$ in this paper, the definitions here are for the convenience of the rest of the contents.
 
We further assume that the interface $\Gamma$ corresponds to $r^{(3)} = 1$ for the coarse domain and $r^{(3)} = 0$ for the fine domain. Then the elastic wave equation in the coarse domain $\Omega^c$ in terms of the displacement vector ${\bf C} = {\bf C}({\bf r}, t)$ can be written in curvilinear coordinates as (see \cite{petersson2015wave})
\begin{align}\label{elastic_curvi}
	\rho^c\frac{\partial^2{\bf C}}{\partial^2 t} = \frac{1}{J^c}\left[\bar{\partial}_1(A_1^c\nabla_r{\bf C}) + \bar{\partial}_2(A_2^c\nabla_r{\bf C}) +\bar{\partial}_3(A_3^c\nabla_r{\bf C}) \right],\ \ \  {\bf r}\in[0,1]^3,\ \ \  t\geq0,
\end{align}
where $\rho^c$ is the density function in the coarse domain $\Omega^c$.  We define
\begin{align*} 
A_k^c\nabla_r{\bf C} = \sum_{j = 1}^3 N_{kj}^c\bar{\partial}_j {\bf C}, \ \ \ k = 1,2,3,
\end{align*}
with $\nabla_r  = (\bar{\partial}_1, \bar{\partial}_2, \bar{\partial}_3)^T$,  $\bar{\partial}_i =\frac{\partial}{\partial r^{(i)}}$, for $i = 1,2,3$ and
\begin{equation}\label{N_definition}
	N_{ij}^c = J^c\sum_{l,k = 1}^3\xi_{li}^cO_l^TZ^cO_k\xi_{kj}^c, \ \ i,j = 1,2,3,
\end{equation}
where, 
\[ O_{1}^T = \left(\begin{array}{cccccc}
1 & 0 & 0 &0 & 0 & 0\\
0 & 0 & 0 &0 & 0 & 1\\
0 & 0 & 0 &0 & 1 & 0\end{array}\right), \ \  O_{2}^T = \left(\begin{array}{cccccc}
0 & 0 & 0 &0 & 0 & 1\\
0 & 1 & 0 &0 & 0 & 0\\
0 & 0 & 0 &1 & 0 & 0\end{array}\right),  \ \ O_{3}^T = \left(\begin{array}{cccccc}
0 & 0 & 0 &0 & 1 & 0\\
0 & 0 & 0 &1 & 0 & 0\\
0 & 0 & 1 &0 & 0 & 0\end{array}\right),\]
$Z^c$ is a $6\times 6$ stiffness matrices which is symmetric and positive definite and $\xi_{kj} = \frac{\partial r^{(j)}}{\partial x^{(k)}}$. Further, Define $M^c_{lk} = O_l^TZ^cO_k$, then $M_{ii}^c$ are also symmetric positive definite and $M_{ij}^c = M_{ji}^T$. In particular, for the isotropic elastic wave equation, we have
\[ M_{11}^c = \left(\begin{array}{ccc}
2\mu^c+\lambda^c & 0 & 0\\
0 & \mu^c & 0\\
0 & 0 & \mu^c\end{array}\right),\ \ \  M_{12}^c = \left(\begin{array}{ccc}
0 & \lambda^c & 0\\
\mu^c & 0 & 0\\
0 & 0 & 0\end{array}\right), \]
\begin{equation}\label{M_definition}
M_{22}^c = \left(\begin{array}{ccc}
\mu^c & 0 & 0\\
0 & 2\mu^c+\lambda^c & 0\\
0 & 0 & \mu^c\end{array}\right),\ \ \ M_{13}^c = \left(\begin{array}{ccc}
0 & 0 & \lambda^c\\
0 & 0 & 0\\
\mu^c & 0 & 0\end{array}\right),
\end{equation}
\[\ M_{33}^c = \left(\begin{array}{ccc}
\mu^c & 0 & 0\\
0 & \mu^c & 0\\
0 & 0 & 2\mu^c+\lambda^c\end{array}\right),\ \ \ M_{23}^c = \left(\begin{array}{ccc}
0 & 0 & 0\\
0 & 0 & \lambda^c\\
0 & \mu^c & 0\end{array}\right),\]
\[ M_{31}^c = (M_{13}^c)^T, \ \ \  M_{32}^c =(M_{23}^c)^T, \ \ \ M_{21}^c =(M_{12}^c)^T.\]
Here, $\lambda^c$ and $\mu^c$ are the first and second Lam{\'{e}} parameters, respectively. 

From (\ref{N_definition}) we find that even in the isotropic case the matrices $N_{ij}^c$ are full. Hence, wave propagation in isotropic media has anisotropic properties in curvilinear coordinates. In both isotropic and anisotropic material, the matrices $N_{ii}^c$, $i = 1,2,3$, are symmetric positive definite and $N_{ij}^c=\big(N_{ji}^c\big)^T$, $i,j=1,2,3$. 

Last, $J^c$ is the Jacobian of the coordinate transformation with
\[J^c = \mbox{det} \left(\bar{\partial}_1 {\bf X}^c, \bar{\partial}_2 {\bf X}^c, \bar{\partial}_3 {\bf X}^c\right)\in (0,\infty).\] 
Denote the unit outward normal ${\bf n}_i^{c,\pm} = (n_i^{c,\pm,1},n_i^{c,\pm,2},n_i^{c,\pm,3})$, $i = 1,2,3$, for the boundaries of the subdomain $\Omega^c$, then
\begin{align}\label{outward_normal}
{\bf n}_i^{c,\pm}  = \pm \frac{\nabla_x R^{c,(i)}}{|\nabla_x R^{c,(i)}|}.
\end{align}
Here, $\nabla_x = (\partial_1, \partial_2, \partial_3)^T$, $\partial_i = \frac{\partial}{\partial
  x^{(i)}}$, $i = 1,2,3$. Here, $'+'$ corresponds to $r^{(i)} = 1$ and $'-'$ corresponds to $r^{(i)}
= 0$. The relation between covariant basis vectors $\bar{\partial}_i {\bf X}^c, i = 1,2,3$ and
contravariant basis vectors $\nabla_x R^{c,(i)}, i = 1,2,3$ can be found in \cite{petersson2015wave,
  thompson1985numerical}. The elastic wave equation in curvilinear coordinates for the fine domain
in terms of the displacement vector ${\bf F} = {\bf F}({\bf r}, t)$ is defined in the same way as in
the coarse domain. We have
\begin{align}\label{elastic_curvi_f}
	\rho^f\frac{\partial^2{\bf F}}{\partial^2 t} &= \frac{1}{J^f}\left[\bar{\partial}_1(A_1^f\nabla_r{\bf F}) + \bar{\partial}_2(A_2^f\nabla_r{\bf F}) + \bar{\partial}_3(A_3^f\nabla_r{\bf F}) \right], \ \ \  {\bf r}\in [0,1]^3,\ \ \  t\geq0.
\end{align}

At the interface $\Gamma$, suitable physical interface conditions are the continuity of the traction vectors and the continuity of the displacement vectors,
\begin{equation}\label{interface_cond}
\frac{A_3^c\nabla_r{\bf C}}{J^c\Lambda^c}= \frac{A_3^f\nabla_r{\bf F}}{J^f\Lambda^f}, \quad {\bf F} = {\bf C},
\end{equation}
where
\begin{equation}\label{lambda_cf}
\Lambda^c = \big|\nabla_x R^{c,(3)}\big| , \ \ \ \ \Lambda^f = \big|\nabla_x R^{f,(3)}\big|. 
\end{equation}
Together with suitable physical boundary conditions, the problem (\ref{elastic_curvi}, \ref{elastic_curvi_f}) is well-posed \cite{duru2014stable, petersson2015wave}.

\section{The spatial discretization}

In this section, we describe the spatial discretization for the problem (\ref{elastic_curvi}, \ref{elastic_curvi_f}, \ref{interface_cond}) and start with the SBP operators for the first and second derivative.

\subsection{SBP operators in $1$D}\label{sec_sbp_1d}
Consider a uniform discretization of the domain $x\in[0,1]$ with the grids,
\[\wt
	{\bf x} = [x_0,x_1,\cdots,x_n,x_{n+1}]^T,\ \  x_i = (i-1)h,\ \ i = 0,1,\cdots,n,n+1,\ \ h = 1/(n-1),\]
where $i = 1,n$ correspond to the grid points at the boundary, and $i = 0,n+1$ are ghost points outside of the physical domain. The  operator $D \approx \frac{\partial }{\partial x}$ is a first derivative SBP operator \cite{Kreiss1974,Strand1994} if 
\begin{equation}\label{first_sbp}
({\bf u}, D{\bf v})_h = -(D{\bf u},{\bf v})_h - u_1v_1 + u_nv_n,
\end{equation}
with a scalar product
\begin{equation}\label{inner_product}
({\bf u},{\bf v})_h = h\sum_{i = 1}^{n}\omega_iu_iv_i.
\end{equation}
Here, $0<\omega_i < \infty $ are the weights of scalar product. The SBP operator $D$ has a centered difference stencil at the grid points away from the boundary and the corresponding weights $\omega_i = 1$. To satisfy the SBP identity (\ref{first_sbp}), the coefficients in $D$ are  modified at a few points near the boundary and the corresponding weights $\omega_i \neq 1$. The operator $D$ does not use any ghost points. To discretize the elastic wave equation, we also need to approximate the second derivative with a variable coefficient $(\gamma(x)u_x)_x$. Here, the known function $\gamma(x)>0$ describes the property of the material. There are two different fourth order accurate SBP operators for the approximation of $(\gamma(x)u_x)_x$. The first one $\wt{G}(\gamma){\bf u} \approx (\gamma(x)u_x)_x $, derived by Sj\"ogreen and Petersson \cite{sjogreen2012fourth}, uses one ghost point outside each boundary, and satisfies the second derivative SBP identity,
\begin{equation}\label{sbp_2nd_1}
({\bf u}, \wt{G}(\gamma){\bf v})_h = -S_\gamma({\bf u},{\bf v})-u_1\gamma_1\wt{\bf b}_1{\bf v} + u_n\gamma_n\wt{\bf b}_n {\bf v}.
\end{equation}
Here, the symmetric positive semi-definite bilinear form $S_\gamma({\bf u},{\bf v}) = (D{\bf u},\gamma D{\bf v})_h + ({\bf u}, P(\gamma){\bf v})_{hr}$ does not use any ghost points, $(\cdot,\cdot)_{hr}$ is a standard discrete scalar $L^2$ inner product. The positive semi-definite operator $P(\gamma)$ is small for smooth grid functions but non-zero for odd-even modes, see \cite{petersson2015wave,sjogreen2012fourth} for details. The operators $\wt{\bf b}_1$ and $\wt{\bf b}_n$ take the form 
\begin{equation}\label{sbp_1st_1}
\wt{\bf b}_1 {\bf v} = \frac{1}{h}\sum_{i=0}^{4} \wt{d}_i v_i, \quad\wt{\bf b}_n {\bf v} = \frac{1}{h}\sum_{i=n-3}^{n+1} \wt{d}_i v_i.
\end{equation}
They are fourth order approximations of the first derivative $v_x$ on the left and right boundaries, respectively. We note that the notation $\wt{G}(\gamma){\bf v}$ implies that the operator $\wt{G}$ uses ${\bf v}$ on all grid points $\wt{\bf x}$, but $\wt{G}(\gamma){\bf v}$ only returns values on the grid ${\bf x}$ without ghost points. Therefore, when writing in matrix form, $\wt{G}$ is a rectangular matrix of size $n$ by $n+2$.

 In \cite{wang2018fourth}, a method was developed to convert the SBP operator $\wt{G}(\gamma)$ to another SBP operator $G(\gamma)$ which does not use any ghost point and satisfy
 \begin{equation}\label{sbp_2nd_2}
 ({\bf u}, G(\gamma){\bf v})_h = -S_\gamma({\bf u},{\bf v})-u_1\gamma_1{\bf b}_1{\bf v} + u_n\gamma_n{\bf b}_n{\bf v},
 \end{equation}
 where $S_\gamma(\cdot,\cdot)$ is symmetric positive semi-definite. 
 Here, ${\bf b}_1$ and ${\bf b}_n$ are also finite difference operators for the first derivative at the boundaries, and are constructed to fourth order accuracy. They take the form
 \begin{equation}\label{sbp_1st_2}
 {\bf b}_1 {\bf v} = \frac{1}{h}\sum_{i=1}^{5} d_i v_i,\quad {\bf b}_n {\bf v} = \frac{1}{h}\sum_{i=n-4}^{n} d_i v_i.
 \end{equation}
   In this case, ${G}(\gamma)$ is square in matrix form. We note that in  \cite{Mattsson2012}, Mattsson constructed a similar SBP operator with a third order approximation of the first derivative at the boundaries.  
 
For the second derivative SBP operators $\wt{G}(\gamma)$ in (\ref{sbp_2nd_1}) and $G(\gamma)$ in (\ref{sbp_2nd_2}), both of them use a fourth order five points centered difference stencil to approximate $(\gamma u_x)_x$ at the interior points away from the boundaries. For the first and the last six grid points close to the boundaries, the operators $G(\gamma)$ and $\wt{G}(\gamma)$ use second order accurate one-sided difference stencils. They are designed to satisfy (\ref{sbp_2nd_2}) and (\ref{sbp_2nd_1}), respectively.

In the following section, we use a combination of two SBP operators, $\wt{G}(\gamma)$ and $G(\gamma)$, to develop a multi-block finite difference discretization for the elastic wave equation. The first SBP operator is $\wt{G}(\gamma)$ with ghost point, and the second SBP operator $G(\gamma)$, converted from $\wt{G}(\gamma)$, does not use ghost point.

\subsection{Semi-discretization of the elastic wave equation}\label{semi_discrete_form}

\begin{figure}[htbp]
	\centering
\includegraphics[width=0.6\textwidth,trim={0.4cm 0.7cm 0.8cm 1.4cm}, clip]{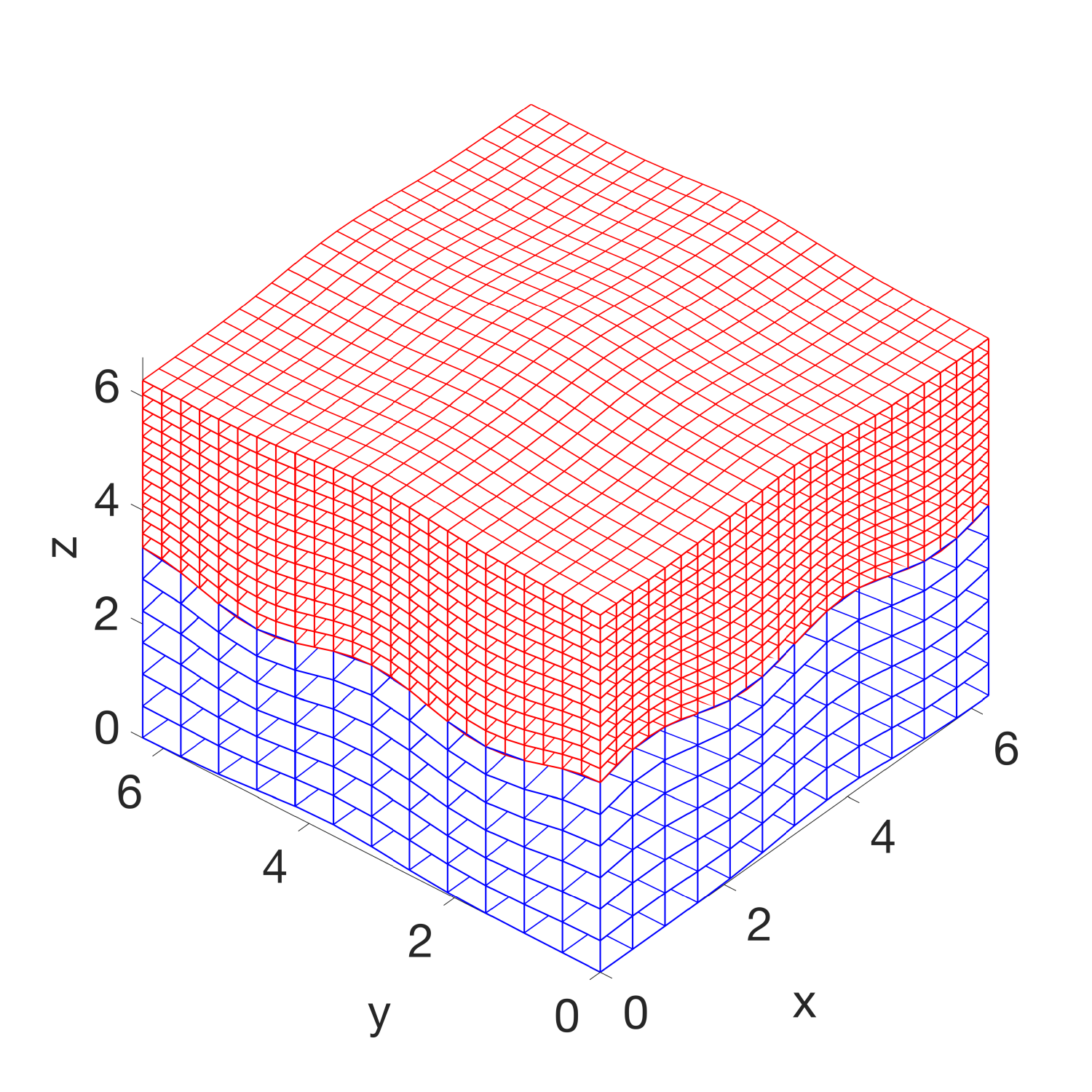}
	\caption{The sketch for the curvilinear mesh of the physical domain $\Omega$. The blue region is the spatial discretization of coarse subdomain $\Omega^c$ and the red region is the spatial discretization of the fine domain $\Omega^f$. Note that $x,y,z$ in the graph correspond to $x^{(1)}, x^{(2)}, x^{(3)}$ respectively. 
	 }\label{physical_discretization}
\end{figure}

In this section, we discretize the elastic wave equations (\ref{elastic_curvi}) and  (\ref{elastic_curvi_f}) with mesh refinement interface $\Gamma$. We assume the ratio of mesh sizes in the reference domains is $1:2$, that is the mesh sizes satisfy
\[h_1(n_1^h-1) = 1, \ \ \ h_2(n_2^h-1) = 1, \ \ \ h_3(n_3^h-1) = 1,\]
and
\[2h_1(n_1^{2h}-1) = 1, \ \ \ 2h_2(n_2^{2h}-1) = 1, \ \ \ 2h_3(n_3^{2h}-1) = 1.\]
 Other ratios can be treated analogously. Figure \ref{physical_discretization} gives an illustration of the discretization of a physical domain. This is an ideal mesh if the wave speed in $\Omega^f$ is half of the wave speed in $\Omega^c$.

In seismic wave simulation, far-field boundary conditions are often imposed in the $x^{(1)}$ and $x^{(2)}$ directions. Here, our focus is on the numerical treatment of the interface conditions (\ref{interface_cond}). Therefore, we assume periodic boundary conditions in $x^{(1)}$ and $x^{(2)}$ and ignore the boundaries in $x^{(3)}$. In Figure \ref{section_discretization}, we fix $x^{(2)} = 0$ and present the $x^{(1)}$-$x^{(3)}$ section of the domain $\Omega$ in both curvilinear space and parameter space.
\begin{figure}[htbp]
	\centering
	\includegraphics[width=0.45\textwidth,trim={1.0cm 2.0cm 1.0cm 1.8cm}, clip]{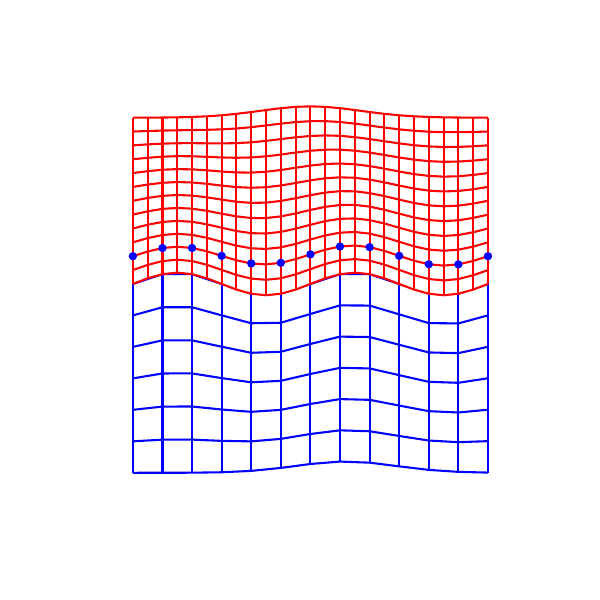}
	\includegraphics[width=0.45\textwidth,trim={1.0cm 2.0cm 1cm 1.8cm}, clip]{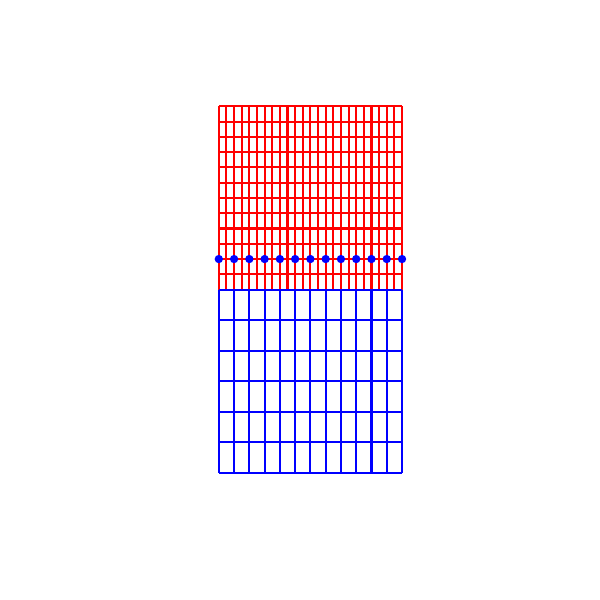}
\caption{The meshes in the physical domain (left) and parameter domain (right) of $x^{(1)}$-$x^{(3)}$ section with $x^{(2)} = 0$. The blue dots are the ghost points for the coarse domain $\Omega^c$.}\label{section_discretization}
\end{figure}
 To condense notations, we introduce the multi-index notations
\[{\bf i} = (i,j,k),\ \ {\bf r}_{\bf i} = (r^{(1)}_i,r^{(2)}_j,r^{(3)}_k),\ \ {\bf x}_{\bf i} = (x^{(1)}_i,x^{(2)}_j,x^{(3)}_k),\]
and group different sets of grid points according to
\begin{equation*}
\begin{aligned}
	I_{\Omega^c} &= \{i = 1,2,\cdots,n_1^{2h}, j = 1,2,\cdots,n_2^{2h}, k = 1,2,\cdots,n_3^{2h}\},\\
	I_{\Gamma^c} & = \{i = 1,2,\cdots,n_1^{2h}, j = 1,2,\cdots,n_2^{2h}, k = n_3^{2h}\},\\
	I_{\Omega^f} &= \{i = 1,2,\cdots,n_1^h, j = 1,2,\cdots,n_2^h, k = 1,2,\cdots,n_3^h\},\\
	I_{\Gamma^f} & = \{i = 1,2,\cdots,n_1^{h}, j = 1,2,\cdots,n_2^{h}, k = 1\}.
\end{aligned}	
\end{equation*}
The physical coordinates of the coarse grid points and fine grid points follow from the mappings ${\bf x}_{\bf i} = {\bf X}^c({\bf r}_{\bf i})$ and ${\bf x}_{\bf i} = {\bf X}^f({\bf r}_{\bf i})$, respectively. We denote a grid function by
\[{\bf u}_{\bf i} = {\bf u}_{i,j,k} = {\bf u}({\bf x}_{\bf i}),\]
where ${\bf u}$ can be either a scalar or vector. To distinguish between the continuous variables and the corresponding approximations on the grid, we use ${\bf c}_{\bf i}$ and ${\bf f}_{\bf i}$ to denote the grid functions for the approximations of ${\bf C}({\bf x}_{\bf i})$ and ${\bf F}({\bf x}_{\bf i})$, respectively. Let {\bf c} and {\bf f} be the vector representations of the grid functions ${\bf c_i}$ and ${\bf f_i}$ respectively. The elements of $\bf c$ and $\bf f$ are ordered in the following way:\\
		a). for each grid point ${\bf x}_{\bf i}$, there is a $3\times 1$ vector, say ${\bf c}_{\bf i} = (c^{(1)}_{\bf i}, c^{(2)}_{\bf i}, c^{(3)}_{\bf i})^T$ and ${\bf f}_{\bf i} = (f^{(1)}_{\bf i}, f^{(2)}_{\bf i}, f^{(3)}_{\bf i})^T$;\\
		b). the grid points are ordered such that they first loop over $r^{(1)}$ direction ($i$), then $r^{(2)}$ direction ($j$), and finally $r^{(3)}$ direction ($k$) as  
		\[{\bf c} = [c^{(1)}_{1,1,1}, c^{(2)}_{1,1,1}, c^{(3)}_{1,1,1},c^{(1)}_{2,1,1}, c^{(2)}_{2,1,1}, c^{(3)}_{2,1,1},\cdots]^T, \quad {\bf f} = [f^{(1)}_{1,1,1}, f^{(2)}_{1,1,1}, f^{(3)}_{1,1,1},f^{(1)}_{2,1,1}, f^{(2)}_{2,1,1}, f^{(3)}_{2,1,1},\cdots]^T.\] 
		We note that $\bf c$ contains the ghost point values for $k = n_3^{2h}+1$, but ${\bf f}$ does not contain any ghost point values.
	
In the spatial discretization, we only use ghost points in the coarse domain and do not use ghost points in the fine domain. Comparing with the traditional approach of using ghost points in both domains, the system of linear equations at the interface becomes smaller and has a better structure. For the rest of the paper, the $\sim$ over an operator represents that the operator applies to a grid function with ghost points. We approximate the elastic wave equation (\ref{elastic_curvi}) in $\Omega^c$ by
{
\begin{equation}\label{elastic_semi_c}
\left(({\rho}^{2h}\otimes{\bf I})(J^{2h}\otimes {\bf I})\frac{d^2{{\bf c}}}{dt^2}\right)_{\bf i} = \wt{\mathcal{L}}^{2h}_{\bf i} {{\bf c}},\quad {\bf i}\in I_{\Omega^c},\quad t>0,
\end{equation}
where $\rho^{2h}$ and $J^{2h}$ are $n_1^{2h}n_2^{2h}n_3^{2h}\times n_1^{2h}n_2^{2h}n_3^{2h}$ diagonal matrices with the diagonal elements $\rho^{2h}_{\bf i} = \rho^c({\bf x}_{\bf i})$ and $J^{2h}_{\bf i} = J^c({\bf x}_{\bf i})$, ${\bf i}\in I_{\Omega^c}$; the matrix ${\bf I}$ is a $3\times 3$ identity matrix because the spatial dimension of the governing equation is $3$; finally, the discrete spatial operator is
\begin{equation}\label{L_operator}
\wt{\mathcal{L}}^{2h} {{\bf c}} = \left(\sum_{l=1}^2{Q}_l^{2h}({N}_{ll}^{2h}){\bf c}+\wt{{G}}_3^{2h}({N}_{33}^{2h}){{\bf c}}+\sum_{l=1}^3\sum_{m=1,m\neq l}^3{D}_l^{2h}({N}_{lm}^{2h}{D}_m^{2h}{\bf c})\right),
\end{equation}
which uses ghost points when computing $\wt{G}_3^{2h}(N^{2h}_{33}){\bf c}$. 
In Appendix \ref{appendix_cdomain}, the terms $Q_l^{2h}(N_{ll}^{2h}){\bf c}$, $\wt{G}_3^{2h}(N_{33}^{2h}){\bf c}$ and $D_{l}^{2h}(N_{lm}^{2h}D_m^{2h}{\bf c})$ are presented, which approximate $\bar{\partial}_l(N_{ll}\bar{\partial}_l{\bf C})$, $\bar{\partial}_3(N_{33}\bar{\partial}_3 {\bf C})$ and $\bar{\partial}_l(N_{lm}\bar{\partial}_m {\bf C})$, respectively.

Next, we approximate the elastic wave equation (\ref{elastic_curvi_f}) on the fine grid points. For all fine grid points that are not located at the interface $\Gamma$, the semi-discretization  is
{
\begin{equation}\label{elastic_semi_f}
\left(({\rho}^{h}\otimes{\bf I})(J^h\otimes{\bf I})\frac{d^2{{\bf f}}}{dt^2}\right)_{\bf i} = {\mathcal{L}}^{h}_{\bf i} {{\bf f}},\quad {\bf i}\in I_{\Omega^f}\backslash I_{{\Gamma^f}},\quad t>0.
\end{equation}
Here, $\rho^{h}$ and $J^{h}$ are $n_1^{h}n_2^{h}n_3^{h}\times n_1^{h}n_2^{h}n_3^{h}$ diagonal matrices with the diagonal elements $\rho^h_{\bf i} = \rho^f({\bf x}_{\bf i})$ and $J^h_{\bf i} = J^f({\bf x}_{\bf i})$, ${\bf i}\in I_{\Omega^f}$. And the discrete spatial operator is
\begin{equation}\label{Lf_operator}
{\mathcal{L}}^{h} {{\bf f}} = \left(\sum_{l=1}^2{Q}_l^{h}({N}_{ll}^h){\bf f}+{G}_3^{h}({N}_{33}^h){\bf f}+\sum_{l=1}^3\sum_{m=1,m\neq l}^3{D}_l^{h}({N}_{lm}^{h}{D}_m^{h}{\bf f})\right).
\end{equation}
Here, the term ${G}_3^{h}(N_{33}^{h}){\bf f}$ approximating $\bar{\partial}_3(N_{33}\bar{\partial}_3 {\bf F})$ without using any ghost points is presented in Appendix \ref{appendix_cdomain}; the terms $Q_l^{h}(N_{ll}^{h}){\bf f}$ and $D_{l}^{h}(N_{lm}^{2h}D_m^{h}{\bf f})$ are defined similar as those in (\ref{L_operator}) and are used to approximate $\bar{\partial}_l(N_{ll}\bar{\partial}_l{\bf F})$ and $\bar{\partial}_l(N_{lm}\bar{\partial}_m {\bf F})$, respectively. 

For the approximation at the interface $\Gamma$, we obtain the numerical solution using a scaled interpolation operator
\begin{equation}\label{continuous_sol}
{\bf f}_{\bf i} = {\mathcal{P}}_{\bf i}({\bf c}),\quad {\bf i}\in I_{\Gamma^f},
\end{equation}
which imposes the continuity of the solution at the interface $\Gamma$. 
For energy stability, the operator ${\mathcal{P}}$ must be of a specific form
\begin{equation}\label{scaleP}
{\mathcal{P}} = \left(({J}^h_\Gamma {\Lambda}^h)^{-\frac{1}{2}}{\bf P}({J}^{2h}_\Gamma {\Lambda}^{2h})^{\frac{1}{2}}\right)\otimes{\bf I}.
\end{equation}
Here, $J_{\Gamma}^h$ and $\Lambda^h$ are $n_1^{h}n_2^{h}\times n_1^{h}n_2^{h}$ diagonal matrices with diagonal elements $J_{\Gamma,{\bf i}}^h = J^f({\bf x}_{\bf i})$ and $\Lambda_{\bf i}^h = \Lambda^f({\bf x_i})$, ${\bf i}\in I_{\Gamma^f}$, with $\Lambda^f$ is given in (\ref{lambda_cf}). Similarly, ${J}_{\Gamma}^{2h}$ and ${{\Lambda}^{2h}}$ are $n_1^{2h}n_2^{2h}\times n_1^{2h}n_2^{2h}$ diagonal matrices with diagonal elements $J_{\Gamma,{\bf i}}^{2h} = J^c({\bf x}_{\bf i})$ and $\Lambda_{\bf i}^{2h} = \Lambda^c({\bf x_i})$, ${\bf i}\in I_{\Gamma^c}$, with $\Lambda^c$ is given in (\ref{lambda_cf}). Finally, ${\bf P}$ is an interpolation operator of size $n_1^hn_2^h\times n_1^{2h}n_2^{2h}$ for scalar grid functions at $\Gamma^c$. Since the spatial discretization is fourth order accurate, we also use a fourth order interpolation. With mesh refinement ratio  $1:2$, the stencils ${\bf P}$ have four cases as illustrated in  Figure \ref{interpolation}. Consequently, the scaled interpolation operator $\mathcal{P}$ is also fourth order accurate.

In the implementation of our scheme, we use \eqref{continuous_sol} to obtain the solution at the interface of the fine domain. However, in the energy analysis in  Sec.~\ref{sec_energy}, it is more convenient to use the equivalent form
\begin{equation}\label{elastic_semi_f_i}
\left(({\rho}^h\otimes{\bf I})(J^h\otimes{\bf I}) \frac{d^2{\bf f}}{dt^2} \right)_{\bf i}=
\mathcal{L}^h_{\bf i}{\bf f} + {\bm \eta}_{\bf i}, \quad {\bf i}\in I_{\Gamma^f}
\end{equation}
with 
\begin{equation}\label{eta}
{\bm \eta} = \left(({\rho}^hJ^h)\otimes{\bf I}\right){\mathcal{P}}\left(\left(({\rho^{2h}J^{2h}})\otimes{\bf I}\right)^{-1}\wt{\mathcal{L}}^{2h} {\bf c}\right) - \mathcal{L}^{h}{\bf f}.
\end{equation}
The variable $\bm \eta$ in (\ref{eta}) is approximately zero with a second order truncation error, which is of the same order as the boundary stencil of the SBP operator. Hence,  $\bm \eta$ does not affect the order of truncation error in the spatial discretization. 
For the simplicity of analysis, we introduce a general notation for the schemes (\ref{elastic_semi_f}) and (\ref{elastic_semi_f_i}) in the fine domain $\Omega^f$,
\begin{align}\label{fine_scheme}
\left(({\rho}^h\otimes{\bf I})(J^h\otimes{\bf I})\frac{d^2{\bf f}}{dt^2}\right)_{\bf i} = \hat{\mathcal{L}}^h_{\bf i}{\bf f} = \left\{
\begin{aligned}
&\mathcal{L}^h_{\bf i}{\bf f} +{\bm \eta}_{\bf i}, \quad {\bf i}\in I_{\Gamma^f}\\
&\mathcal{L}^h_{\bf i}{\bf f},\quad\quad\quad {\bf i}\in I_{\Omega^f}\backslash I_{\Gamma^f} 
\end{aligned}
\right. \quad t > 0.
\end{align}

The following condition imposes continuity of traction at the interface, the first equation in (\ref{interface_cond}),
\begin{equation}\label{continuous_traction}
\left(\left((\Lambda^{2h}J_{\Gamma}^{2h})\otimes{\bf I}\right)^{-1}\wt{\mathcal{A}}_3^{2h}{\bf c}\right)_{\bf i}
= {\mathcal{R}}_{\bf i}\Big(\left((\Lambda^hJ_{\Gamma}^h)\otimes{\bf I}\right)^{-1}(\mathcal{A}_3^h{\bf f}-h_3\omega_1{\bm \eta})\Big), \quad {\bf i}\in I_{\Gamma^c}.
\end{equation}
Here,  $\left((\Lambda^{2h}J_{\Gamma}^{2h})\otimes{\bf I}\right)^{-1}\wt{\mathcal{A}}_3^{2h}{\bf c}$ and $\left((\Lambda^hJ_{\Gamma}^h)\otimes{\bf I}\right)^{-1}\mathcal{A}_3^h{\bf f}$ are approximations of the traction at the interface on the coarse grid and fine grid, respectively. The definitions of $\wt{\mathcal{A}}_3^{2h}{\bf c}$ and $\mathcal{A}_3^{h}{\bf f}$ are given in Appendix \ref{appendix_cdomain}. The operator $\mathcal{R}$ is a scaled restriction operator with the structure 
\begin{equation}\label{scaleR}
 {\mathcal{R}} =  \left(({J}^{2h}_\Gamma{\Lambda}^{2h})^{-\frac{1}{2}}{\bf R}({J}^{h}_\Gamma {\Lambda}^h)^{\frac{1}{2}}\right)\otimes {\bf I},
 \end{equation}
 where the stencils of ${\bf R}$  in Figure \ref{restriction} are determined by the compatibility condition ${\bf R}=\frac{1}{4}{\bf P}^T$. It is a restriction operator of size $n_1^{2h}n_2^{2h}\times n_1^hn_2^h$ for scalar grid functions at $\Gamma^f$.   Finally, $h_3\omega_1{\bm \eta}$ in \eqref{continuous_traction} is a term essential for stability, because in the stability analysis in the next section it cancels out ${\bm \eta}$ in the fine domain spatial discretization \eqref{fine_scheme}. The term is smaller than the truncation error of spatial discretization, so it does not affect the overall order of truncation error. Hence, (\ref{continuous_traction})  is a sufficiently accurate approximation for the continuity of traction at the interface.  
 As will be seen later, the compatibility condition, as well as the scaling of the interpolation and restriction operators, are important for energy stability \cite{Lundquist2018}. 
  We also remark that the condition \eqref{continuous_traction} determines the ghost points values in the coarse domain. 

Let ${\bf u}$ and ${\bf v}$ be grid functions in the coarse domain $\Omega^c$. We define the discrete inner product at the interface by
\begin{equation}\label{scalar_product_discrete_interface_c}
\left<{\bf u}, {\bf v}\right>_{2h} = 4h_1h_2\sum_{i=1}^{n_1^{2h}}\sum_{j=1}^{n_2^{2h}}{  J}_{\Gamma,i,j,n_3^{2h}}^{2h}\Lambda_{i,j,n_3^{2h}}^{2h}({\bf u}_{i,j,n_3^{2h}}\cdot {\bf v}_{i,j,n_3^{2h}}).
\end{equation}
 Similarly, the discrete inner product at the interface for fine domain $\Omega^f$ is defined as
\begin{equation}\label{scalar_product_discrete_interface_f}
\left<{\bf u}, {\bf v}\right>_{h} = h_1h_2\sum_{i=1}^{n_1^{h}}\sum_{j=1}^{n_2^{h}}{J}_{\Gamma,i,j,1}^h\Lambda_{i,j,1}^h({\bf u}_{i,j,1}\cdot {\bf v}_{i,j,1})
\end{equation}
when $\bf u$ and $\bf v$ are grid functions in fine domain $\Omega^f$. Then we have the following lemma for the interpolation and restriction operators.
 
 \begin{lemma}\label{lemma1}
 	Let ${\bf c}$ and ${\bf f}$ be grid functions at the interface for coarse domain and fine domain, respectively. Then the interpolation operator $\mathcal{P}$ and the restriction operator $\mathcal{R}$ satisfy
 	\begin{equation}\label{pr_relation}
 	\left<\mathcal{P} {\bf c}, {\bf f}\right>_h = \left<{\bf c}, \mathcal{R}{\bf f}\right>_{2h}
 	\end{equation}
 	if the compatibility condition $\bm{R} = \frac{1}{4}\bm{P}^T$ holds. 
 \end{lemma}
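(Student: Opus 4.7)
The plan is to translate both discrete inner products into matrix–vector form, substitute the definitions of $\mathcal{P}$ and $\mathcal{R}$, and then show that the compatibility condition $\mathbf{R} = \frac{1}{4}\mathbf{P}^T$ is exactly what is needed to match the two sides, with the factor $4$ absorbing the mesh-size mismatch $4h_1h_2$ versus $h_1h_2$ in (\ref{scalar_product_discrete_interface_c}) and (\ref{scalar_product_discrete_interface_f}).

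First I would rewrite \eqref{scalar_product_discrete_interface_c} and \eqref{scalar_product_discrete_interface_f} as weighted Euclidean inner products. Namely, for interface grid functions $\mathbf{u},\mathbf{v}$ in the coarse and fine domains respectively,
\begin{equation*}
\langle \mathbf{u},\mathbf{v}\rangle_{2h} = 4h_1h_2\,\mathbf{u}^T\bigl((J^{2h}_\Gamma\Lambda^{2h})\otimes\mathbf{I}\bigr)\mathbf{v},\qquad \langle \mathbf{u},\mathbf{v}\rangle_{h} = h_1h_2\,\mathbf{u}^T\bigl((J^{h}_\Gamma\Lambda^{h})\otimes\mathbf{I}\bigr)\mathbf{v}.
\end{equation*}
This reduces the claim to a matrix identity between the two quadratic expressions.

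Next I would substitute the definitions \eqref{scaleP} and \eqref{scaleR} into each side. On the left,
\begin{equation*}
\langle \mathcal{P}\mathbf{c},\mathbf{f}\rangle_h = h_1h_2\,\mathbf{c}^T\Bigl(\bigl((J^{2h}_\Gamma\Lambda^{2h})^{1/2}\mathbf{P}^T(J^h_\Gamma\Lambda^h)^{-1/2}\bigr)(J^h_\Gamma\Lambda^h)\Bigr)\otimes\mathbf{I}\,\mathbf{f},
\end{equation*}
and since the diagonal factors commute, the inner factor simplifies to $(J^{2h}_\Gamma\Lambda^{2h})^{1/2}\mathbf{P}^T(J^h_\Gamma\Lambda^h)^{1/2}$. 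Analogously on the right,
\begin{equation*}
\langle \mathbf{c},\mathcal{R}\mathbf{f}\rangle_{2h} = 4h_1h_2\,\mathbf{c}^T\Bigl((J^{2h}_\Gamma\Lambda^{2h})(J^{2h}_\Gamma\Lambda^{2h})^{-1/2}\mathbf{R}(J^h_\Gamma\Lambda^h)^{1/2}\Bigr)\otimes\mathbf{I}\,\mathbf{f},
\end{equation*}
which simplifies to $4h_1h_2\,\mathbf{c}^T\bigl((J^{2h}_\Gamma\Lambda^{2h})^{1/2}\mathbf{R}(J^h_\Gamma\Lambda^h)^{1/2}\bigr)\otimes\mathbf{I}\,\mathbf{f}$. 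Applying $\mathbf{R}=\tfrac14\mathbf{P}^T$ makes this expression coincide with the left-hand side, establishing \eqref{pr_relation}.

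The step that requires the most care is the manipulation of the Kronecker-product identity $(A\otimes\mathbf{I})^T=A^T\otimes\mathbf{I}$ together with the commutativity of the diagonal scaling matrices $J^h_\Gamma\Lambda^h$ and $(J^h_\Gamma\Lambda^h)^{1/2}$, etc., and analogously in the coarse case. Everything else is algebraic cancellation; the only genuinely nontrivial ingredient is the compatibility condition, which produces the exact numerical factor $\tfrac14$ needed to reconcile the weight $4h_1h_2$ on the coarse side with $h_1h_2$ on the fine side. I would conclude by noting that $\mathbf{c}$ and $\mathbf{f}$ are arbitrary, so equality of the quadratic forms implies \eqref{pr_relation}.
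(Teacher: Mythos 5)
Your proposal is correct and follows essentially the same route as the paper's proof: both write the interface inner products as weighted Euclidean forms, move the diagonal weight through the transpose of the scaled operator so the $(J\Lambda)^{\pm 1/2}$ factors cancel to $(J\Lambda)^{1/2}$ on each side, and let $\mathbf{R}=\tfrac14\mathbf{P}^T$ absorb the $4h_1h_2$ versus $h_1h_2$ mismatch. No gaps; the paper's version is just the same computation written as a single chain of equalities starting from $\left<\mathcal{P}\mathbf{c},\mathbf{f}\right>_h$.
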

 \begin{proof}
 	From \eqref{scalar_product_discrete_interface_c}--\eqref{scalar_product_discrete_interface_f}, the definition of $\mathcal{P}$ in \eqref{scaleP} and $\mathcal{R}$ in \eqref{scaleR}, we obtain
 	\begin{align*}
 	\left<\mathcal{P}{\bf c}, {\bf f}\right>_h &= h_1h_2\left[\left((J_{\Gamma}^h\Lambda^h)^{\frac{1}{2}} {\bf P}(J_{\Gamma}^{2h}\Lambda^{2h})^{\frac{1}{2}}\otimes {\bf I}\right){\bf c}\right]^T{\bf f}\\
 	& = 4h_1h_2 {\bf c}^T  \left[\left((J_{\Gamma}^{2h}\Lambda^{2h})^{\frac{1}{2}}\frac{1}{4}{\bf P}^T(J_{\Gamma}^h\Lambda^h)^{\frac{1}{2}}\otimes {\bf I}\right) {\bf f}\right]\\
 	& =4h_1h_2 {\bf c}^T  \left[\left((J_{\Gamma}^{2h}\Lambda^{2h})^{\frac{1}{2}}{\bf R}(J_{\Gamma}^h\Lambda^h)^{\frac{1}{2}}\otimes {\bf I}\right) {\bf f}\right] = \left<{\bf c}, \mathcal{R}{\bf f}\right>_{2h}
 	\end{align*}
 \end{proof}

\begin{figure}
	\centering
	\includegraphics[width=0.24\textwidth,trim={1.8cm 0.8cm 1.4cm 1.2cm}, clip]{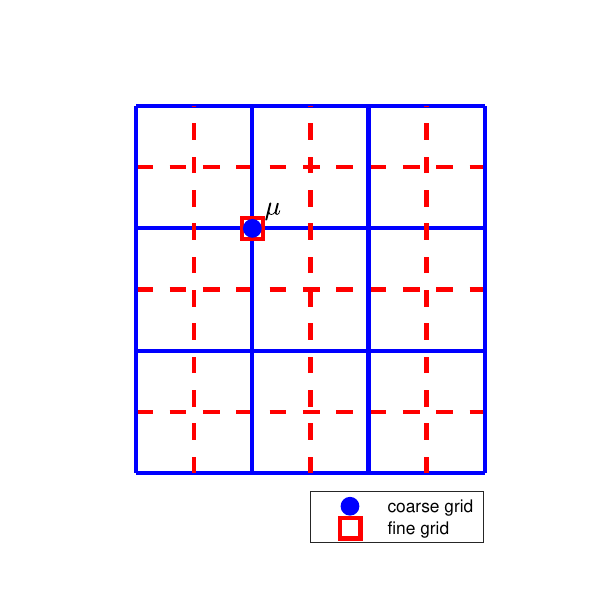}
	\includegraphics[width=0.24\textwidth,trim={1.8cm 0.8cm 1.4cm 1.2cm}, clip]{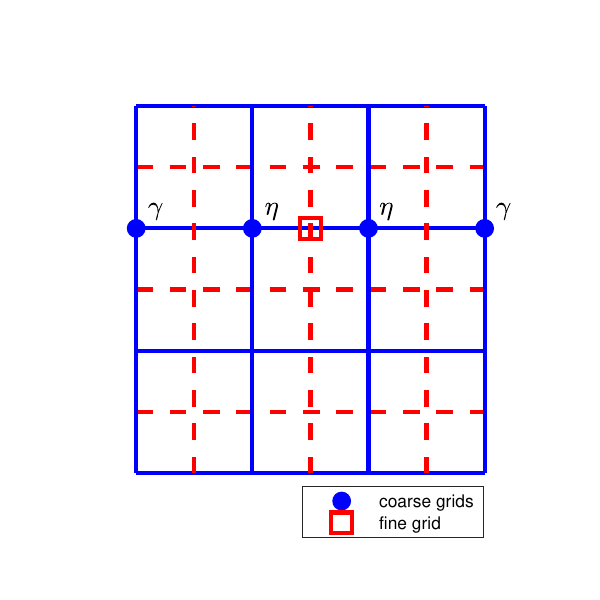}
	\includegraphics[width=0.24\textwidth,trim={1.8cm 0.8cm 1.4cm 1.2cm}, clip]{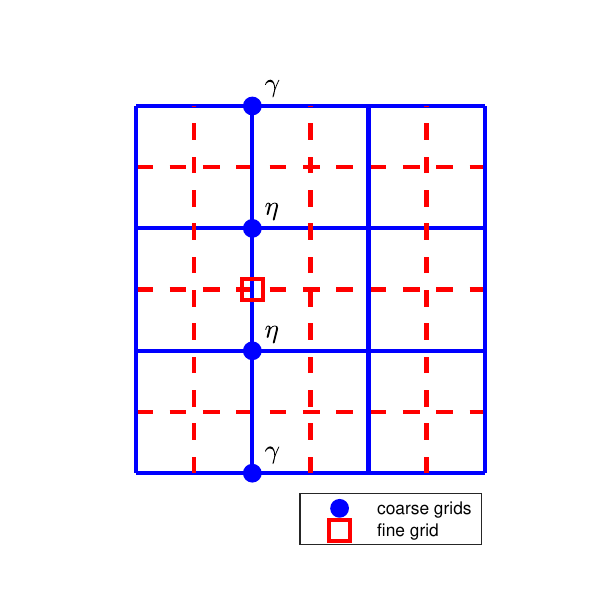}
	\includegraphics[width=0.24\textwidth,trim={1.8cm 0.8cm 1.4cm 1.2cm}, clip]{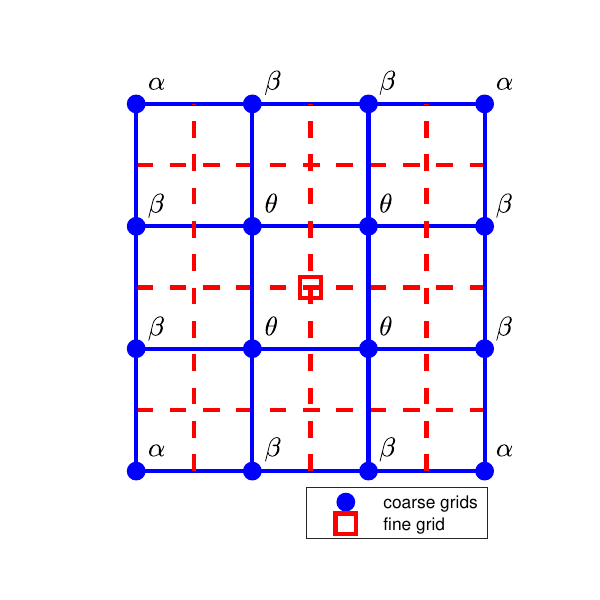}
	\caption{The sketch for the stencils of fourth order interpolation operator ${\bf P}$ in two dimensions with parameters $\gamma = -\frac{1}{16}$, $\eta = \frac{9}{16}$, $\mu = 1$, $\alpha = \frac{1}{256}$, $\beta = -\frac{9}{256}$ and $\theta = \frac{81}{256}$. }\label{interpolation}
\end{figure}
\begin{figure}[htbp]
	\centering
	\includegraphics[width=0.6\textwidth]{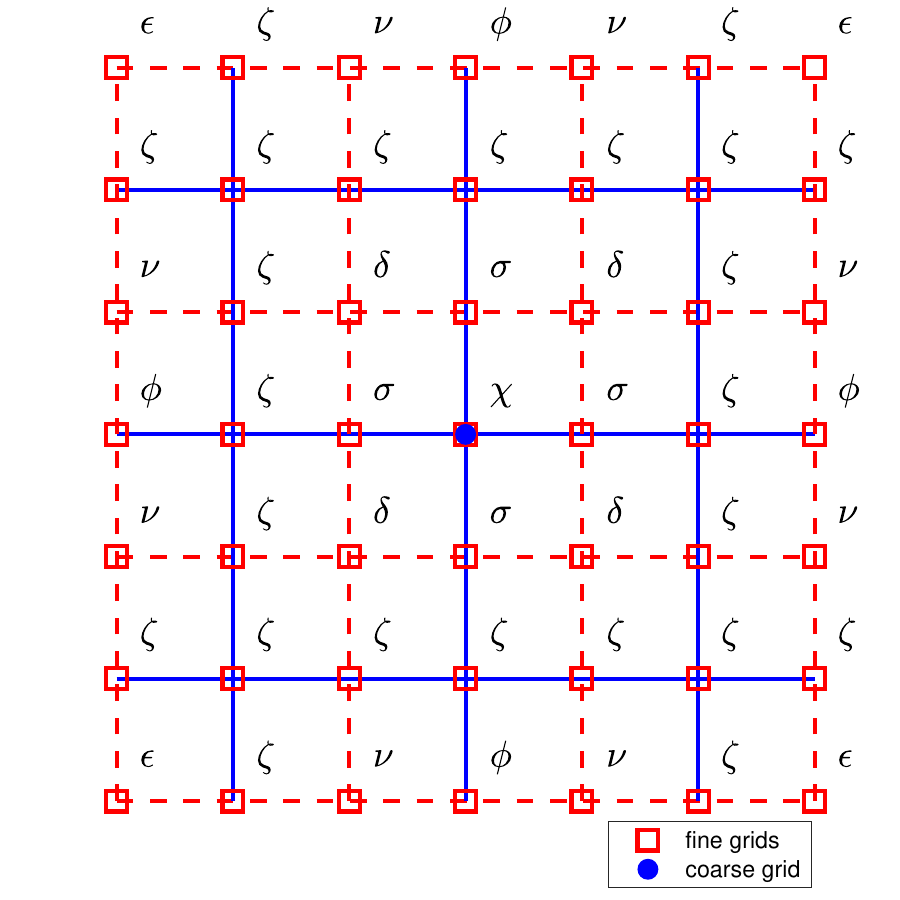}
	\caption{The sketch for the stencil of fourth order restriction operator ${\bf R}$ in two dimensions with parameters $\epsilon = \frac{1}{1024}$, $\nu = -\frac{9}{1024}$, $\phi = -\frac{16}{1024}$, $\delta = \frac{81}{1024}$, $\sigma = \frac{144}{1024}$, $\chi = \frac{256}{1024}$ and $\zeta = 0$.}\label{restriction}
\end{figure}

\subsection{Energy estimate}\label{sec_energy}
In this section, we derive an energy estimate for the semi-discretization (\ref{elastic_semi_c}) and (\ref{fine_scheme}) in Sec.~\ref{semi_discrete_form}. Let ${\bf u}, {\bf v}$ be grid functions in the coarse domain $\Omega^c$ and define the three dimensional discrete scalar product in $\Omega^c$ as
\begin{equation}\label{scalar_product_inner}
({\bf v}, {\bf u})_{2h} = 8h_1h_2h_3\sum_{i=1}^{n_1^{2h}}\sum_{j=1}^{n_2^{2h}}\sum_{k=1}^{n_3^{2h}}\omega_k{J}^{2h}_{i,j,k}({\bf v}_{i,j,k}\cdot {\bf u}_{i,j,k}).
\end{equation}
 Similarly, define the three dimensional discrete scalar product in $\Omega^f$ as
\begin{equation}\label{scalar_product_inner_f}
({\bf v}, {\bf u})_{h} = h_1h_2h_3\sum_{i=1}^{n_1^{h}}\sum_{j=1}^{n_2^{h}}\sum_{k=1}^{n_3^{h}}\omega_k{J}^{h}_{i,j,k}({\bf v}_{i,j,k}\cdot {\bf u}_{i,j,k}),
\end{equation}
where $\bf u$ and $\bf v$ are grid functions in the fine domain $\Omega^f$. Now, we are ready to state the energy estimate of the proposed schemes in Section \ref{semi_discrete_form}. 
\begin{theorem}\label{thm1}
	The semi-discretization (\ref{elastic_semi_c}) and (\ref{fine_scheme}) is energy stable if the interface conditions \eqref{continuous_sol} and \eqref{continuous_traction} are satisfied.
\end{theorem}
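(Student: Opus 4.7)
The plan is a standard SBP energy argument, with the interface treatment as the main delicate point. I would begin by multiplying the coarse equation \eqref{elastic_semi_c} by $d{\bf c}/dt$ (componentwise) and summing with weights $8h_1h_2h_3\omega_k$, and similarly multiplying the fine equation \eqref{fine_scheme} by $d{\bf f}/dt$ with weights $h_1h_2h_3\omega_k$. The $J^{2h}$ and $J^h$ factors on the left-hand sides then combine with these weights to produce exactly the scalar products \eqref{scalar_product_inner} and \eqref{scalar_product_inner_f}, so the left-hand sides become
\[
\tfrac{1}{2}\tfrac{d}{dt}\big(\dot{\bf c},(\rho^{2h}\otimes{\bf I})\dot{\bf c}\big)_{2h} + \tfrac{1}{2}\tfrac{d}{dt}\big(\dot{\bf f},(\rho^h\otimes{\bf I})\dot{\bf f}\big)_{h},
\]
which is the time derivative of the discrete kinetic energy.

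For the spatial terms I would apply the SBP identities direction by direction. In the $r^{(1)}$ and $r^{(2)}$ directions the periodic boundary conditions mean the first-derivative SBP identity \eqref{first_sbp} produces no boundary contributions, so the terms $Q_l$ and the mixed pieces $D_l(N_{lm}D_m\cdot)$ with $l,m\in\{1,2\}$ contribute only symmetric (positive semidefinite) bilinear forms that assemble, after integration in $t$, into the discrete potential energy. In the $r^{(3)}$ direction the identities \eqref{sbp_2nd_1} (for $\widetilde G_3^{2h}$ on the coarse side) and \eqref{sbp_2nd_2} (for $G_3^{h}$ on the fine side), together with \eqref{first_sbp} applied to the mixed pieces $D_3(N_{3m}D_m\cdot)$ and $D_l(N_{l3}D_3\cdot)$, produce the remaining potential-energy contributions plus boundary terms at the interface $\Gamma$. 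Collecting these boundary terms, I expect them to take the form
\[
-\big\langle \dot{\bf c},\, (\Lambda^{2h}J^{2h}_\Gamma\otimes{\bf I})^{-1}\widetilde{\mathcal A}_3^{2h}{\bf c}\big\rangle_{2h} \;+\; \big\langle \dot{\bf f},\, (\Lambda^{h}J^{h}_\Gamma\otimes{\bf I})^{-1}\mathcal A_3^{h}{\bf f}\big\rangle_{h},
\]
i.e.\ the coarse- and fine-side tractions tested against the interface velocities.

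The interface terms are then eliminated as follows. Differentiating the continuity-of-solution condition \eqref{continuous_sol} in time gives $\dot{\bf f}|_\Gamma=\mathcal P\dot{\bf c}|_\Gamma$. Substituting this into the fine-side interface term and applying Lemma~\ref{lemma1} converts $\langle\mathcal P\dot{\bf c},\cdot\rangle_h$ into $\langle\dot{\bf c},\mathcal R\,\cdot\rangle_{2h}$; the continuity-of-traction condition \eqref{continuous_traction} is then exactly the identity needed for the two interface contributions to cancel. This is the role of the compatibility $\mathbf R=\tfrac14\mathbf P^T$ and of the $(J\Lambda)^{\pm 1/2}$ scalings baked into \eqref{scaleP} and \eqref{scaleR}. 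The auxiliary term $\bm\eta$ in \eqref{fine_scheme} and the matching $h_3\omega_1\bm\eta$ appearing in \eqref{continuous_traction} are inserted precisely so that when the boundary term from $G_3^h$ on the fine side is expanded, the $\bm\eta$ pieces cancel and leave a clean traction-matching statement.

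The outcome is $\tfrac{d}{dt}E=0$, with
\[
E \;=\; \tfrac12\big(\dot{\bf c},(\rho^{2h}\otimes{\bf I})\dot{\bf c}\big)_{2h} + \tfrac12\big(\dot{\bf f},(\rho^h\otimes{\bf I})\dot{\bf f}\big)_{h} + \tfrac12 V^{2h}({\bf c},{\bf c}) + \tfrac12 V^{h}({\bf f},{\bf f}),
\]
where $V^{2h}$ and $V^h$ are the symmetric positive semidefinite potential-energy forms assembled from $Q_l$, $S_\gamma$ and the symmetric mixed-derivative contributions in each subdomain; positivity of these forms (and hence of $E$) follows from positive definiteness of $N_{ll}$ together with the standard SBP bound $S_\gamma\succeq 0$. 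The main obstacle I anticipate is the careful bookkeeping of the mixed-derivative boundary contributions $D_3(N_{3m}D_m)$ and $D_l(N_{l3}D_3)$ in the $r^{(3)}$ direction and verifying that, once combined with the $\tilde G_3$/$G_3$ boundary terms, they consolidate exactly into the scaled tractions $\widetilde{\mathcal A}_3^{2h}{\bf c}$ and $\mathcal A_3^h{\bf f}$ whose definitions are given in Appendix~\ref{appendix_cdomain}; everything else is routine application of SBP and of Lemma~\ref{lemma1}.
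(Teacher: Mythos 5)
Your proposal follows essentially the same route as the paper's proof: test with the scaled velocities to get the kinetic-energy rate, invoke the SBP identities to produce the symmetric bilinear forms plus interface traction terms, and then cancel the interface terms by differentiating \eqref{continuous_sol} in time, applying Lemma~\ref{lemma1}, and substituting \eqref{continuous_traction}, with the $h_3\omega_1\bm\eta$ term absorbing the $\bm\eta$ contribution from \eqref{fine_scheme}. The only discrepancy is cosmetic: your interface boundary terms carry the opposite signs from the paper's $B_{2h}$ (positive, since $\Gamma$ is at $r^{(3)}=1$ for the coarse block) and $B_h$ (negative, since $\Gamma$ is at $r^{(3)}=0$ for the fine block), but since both are flipped consistently the cancellation argument is unaffected.
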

\begin{proof}
		Forming the inner product between (\ref{elastic_semi_c}) and $8h_1h_2h_3\omega_k{\bf c}_t$, and summing over $i,j,k$, we have
	\begin{eqnarray}\label{coarse_simple}
	({\bf c}_t, (\rho^{2h}\otimes {\bf I}){\bf c}_{tt})_{2h} = ({\bf c}_t,(J^{2h}\otimes {\bf I})^{-1}\wt{\mathcal{L}}^{2h}{\bf c})_{2h} = -\mathcal{S}_{2h}({\bf c}_t,{\bf c}) + B_{2h}({\bf c}_t,{{\bf c}}),
	\end{eqnarray}
	where $\mathcal{S}_{2h}({\bf c}_t,{\bf c})$ is a symmetric and positive definite bilinear form given in Appendix \ref{appendix_bf}, the boundary term $B_{2h} ({\bf c}_t,{\bf c})$ is given by
	\begin{equation}\label{bounary_c1}
	B_{2h} ({\bf c}_t,{\bf c}) = 4h_1h_2\sum_{{\bf i}\in I_{\Gamma^c}}\frac{d{\bf c}_{\bf i}}{dt}\cdot (\wt{A}_3^{2h}{\bf c})_{\bf i}.
	\end{equation}
	Forming the inner product between (\ref{fine_scheme}) and $h_1h_2h_3\omega_k{\bf f}_t$, and summing over $i,j,k$, we obtain
	\begin{eqnarray}\label{fine_simple}
	({\bf f}_t, (\rho^{h}\otimes {\bf I}){\bf f}_{tt})_{h} \!=\! ({\bf f}_t,(J^{h}\otimes {\bf I})^{-1}\hat{\mathcal{L}}^{h}{\bf f})_{h} \!=\! -\mathcal{S}_{h}({\bf f}_t,{\bf f}) + B_{h}({\bf f}_t,{{\bf f}}) + h_1h_2h_3\omega_1\sum_{{\bf i}\in I_{\Gamma^f}} \frac{d{\bf f}_{\bf i}}{dt}\cdot{\bm \eta}_{\bf i}.
	\end{eqnarray}
Here, $\mathcal{S}_h$ is also a symmetric and positive definite bilinear form given in Appendix \ref{appendix_bf}. The boundary term $B_h ({\bf f}_t,{\bf f})$ has the following form
	\begin{equation}\label{boundary_f1}
	B_h ({\bf f}_t,{\bf f}) = -h_1h_2\sum_{{\bf i}\in I_{\Gamma^f}}\frac{d{\bf f}_{\bf i}}{dt}\cdot(A_3^h {\bf f})_{\bf i}.
	\end{equation}
	 Adding \eqref{coarse_simple} and \eqref{fine_simple} together, we have
	\begin{multline}\label{semi_energy_1}
	\frac{d}{dt}\big[({\bf f}_t,(\rho^h\otimes {\bf I}) {\bf f}_t)_h + \mathcal{S}_{h}({\bf f},{\bf f}) + ({\bf c}_t,(\rho^{2h}\otimes {\bf I}) {\bf c}_t)_{2h} + \mathcal{S}_{2h}({\bf c},{\bf c}) \big]  = \\
	2B_{h}({\bf f}_t,{\bf f}) + 2B_{2h}({\bf c}_t,{\bf c}) +2h_1h_2h_3\omega_1\sum_{{\bf i}\in I_{\Gamma^f}} \frac{d{\bf f}_{\bf i}}{dt}\cdot{\bm \eta}_{\bf i}.
	\end{multline}
	Substituting (\ref{boundary_f1}) and (\ref{bounary_c1}) into (\ref{semi_energy_1}) and combining the definitions of the scalar product at the interface (\ref{scalar_product_discrete_interface_c})--(\ref{scalar_product_discrete_interface_f}), the continuity of solution at the interface \eqref{continuous_sol} and Lemma \ref{lemma1}, we get
	\begin{align*}
	&\hspace{0.4cm}\frac{d}{dt}\left[({\bf f}_t,(\rho^h\otimes {\bf I}) {\bf f}_t)_{h} + \mathcal{S}_{h}({\bf f},{\bf f}) + ({\bf c}_t,(\rho^{2h}\otimes {\bf I}) {\bf c}_t)_{2h} + \mathcal{S}_{2h}({\bf c},{\bf c}) \right]   \\
	& = 2\left<{\bf f}_t,\big(({\Lambda}^{h}{J}^h_\Gamma\big)\otimes {\bf I})^{-1}(-\mathcal{A}_3^h{\bf f}+h_3\omega_1{\bm \eta})\right>_{h}+ 2\left<{\bf c}_t,\big(({\Lambda}^{2h}{J}^{2h}_\Gamma\big)\otimes{\bf I})^{-1}\wt{\mathcal{A}}_3^{2h}{\bf c}\right>_{2h}\\
	& = 2\left<{\mathcal{P}}{\bf c}_t,\big(({\Lambda}^{h}{J}^h_\Gamma)\otimes{\bf I}\big)^{-1}(-\mathcal{A}_3^h{\bf f}+h_3\omega_1{\bm \eta})\right>_{h}+ 2\left<{\bf c}_t, \big(({\Lambda}^{2h}{J}^{2h}_\Gamma)\otimes{\bf I}\big)^{-1}\wt{\mathcal{A}}_3^{2h}{\bf c}\right>_{2h}\\
	& = 2\left<{\bf c}_t,{\mathcal{R}}\Big(\big(({\Lambda}^{h}{J}^h_\Gamma)\otimes{\bf I}\big)^{-1}(-\mathcal{A}_3^h{\bf f}+h_3\omega_1{\bm \eta})\Big)\right>_{2h}+ 2\left<{\bf c}_t,\big(({\Lambda}^{2h}{J}^{2h}_\Gamma)\otimes{\bf I}\big)^{-1}\wt{\mathcal{A}}_3^{2h}{\bf c}\right>_{2h} = 0.
	\end{align*}
Note that the discrete energy for the semi-discretization \eqref{elastic_semi_c} and \eqref{fine_scheme} is given by $({\bf f}_t,(\rho^h\otimes {\bf I}) {\bf f}_t)_{h} + \mathcal{S}_{h}({\bf f},{\bf f}) + ({\bf c}_t,(\rho^{2h} \otimes {\bf I}){\bf c}_t)_{2h} + \mathcal{S}_{2h}({\bf c},{\bf c})$.
\end{proof}

\section{The temporal discretization}
The equations are advanced in time with an explicit fourth order accurate predictor-corrector time integration method. Like all explicit time stepping methods, the time step must not exceed the CFL stability limit. By a similar analysis as in \cite{sjogreen2012fourth}, we require 
\begin{equation*}
\Delta_t\leq C_{\text{cfl}}\min\{h_1,h_2,h_3\}/\sqrt{\kappa_{\max}},
\end{equation*}
where 
$\kappa_{\text{max}}$ is the maximum eigenvalue of the matrices 
\[T_{\bf i}^{\{f,c\}} = \frac{1}{\rho^{\{f,c\}}({\bf r}_{\bf i})}\left(\begin{array}{ccc}
Tr(N_{11}^{\{f,c\}}({\bf r}_{\bf i})) &  Tr(N_{12}^{\{f,c\}}({\bf r}_{\bf i}))& Tr(N_{13}^{\{f,c\}}({\bf r}_{\bf i}))\\
Tr(N_{21}^{\{f,c\}}({\bf r}_{\bf i})) & Tr(N_{22}^{\{f,c\}}({\bf r}_{\bf i})) & Tr(N_{23}^{\{f,c\}}({\bf r}_{\bf i}))\\
Tr(N_{31}^{\{f,c\}}({\bf r}_{\bf i})) & Tr(N_{32}^{\{f,c\}}({\bf r}_{\bf i})) & Tr(N_{33}^{\{f,c\}}({\bf r}_{\bf i}))\end{array}\right), \]
and $Tr(N_{lm}^{\{f,c\}}({\bf r}_{\bf i}))$ represents the trace of $3\times3$ matrix $N_{lm}^{\{f,c\}}({\bf r}_{\bf i})$. Note that $\kappa_{\text{max}}$ is related to the material properties $\mu^{\{f,c\}}, \lambda^{\{f,c\}}$ and $\rho^{\{f,c\}}$. The notation $\{\cdot,\cdot\}$ represents the component-wise identities. We choose the Courant number $C_{\text{cfl}} = 1.3$, which has been shown to work well in practical problems \cite{petersson2015wave,sjogreen2012fourth}. The Courant number shall not be chosen too close to the stability limit so that noticeable reflections at mesh refinement interfaces can be avoided \cite{Collino2003}. In the following, we give detailed procedures about how we apply the fourth order time integrator to the semidiscretizations (\ref{elastic_semi_c}) and  (\ref{fine_scheme}). 

Let ${\bf c}^{n}$ and ${\bf f}^{n}$ denote the numerical approximations of ${\bf C}({\bf x},t_n), {\bf x}\in\Omega^c$ and ${\bf F}({\bf x},t_n), {\bf x}\in\Omega^f$, respectively. Here, $t_n = n\Delta_t, n = 0,1,\cdots$ and $\Delta_t > 0$ is a constant time step. We present the fourth order time integrator with predictor and corrector in  Algorithm \ref{first_alg}.
~\\
\begin{breakablealgorithm}
	\caption{Fourth order accurate time stepping for the semidiscretizations  (\ref{elastic_semi_c}) and  (\ref{fine_scheme}). }\label{first_alg}
	Given $\wt{{\bf c}}^{n}, \wt{{\bf c}}^{n-1}$ and ${\bf f}^{n}, {\bf f}^{n-1}$ that satisfy the discretized interface conditions.
	
	\begin{itemize}
		\item  {Compute the predictor at the interior grid points 
			\[{\bf c}^{*,n+1}_{\bf i} = 2{\bf c}^{n}_{\bf i} - {\bf c}^{n-1}_{\bf i} + \Delta_t^2\left((\rho^{2h}\otimes{\bf I})(J^{2h}\otimes{\bf I})\right)^{-1}{\wt{\mathcal{L}}}^{2h}_{\bf i} {\bf{c}}^{n},\quad {\bf i}\in I_{\Omega^c},\]
			\[{\bf f}^{*,n+1}_{\bf i} = 2{\bf f}^{n}_{\bf i} - {\bf f}^{n-1}_{\bf i} + \Delta_t^2\left((\rho^{h}\otimes{\bf I})(J^{h}\otimes{\bf I})\right)^{-1}\hat{\mathcal{L}}^{h}_{\bf i} {\bf{f}}^{n},\quad {\bf i}\in I_{\Omega^f}\backslash I_{\Gamma^f}.\]
		}
		\item{At the interface $\Gamma$, the values ${\bf f}^{*,n+1}_{\bf i}$  are computed by the continuity of solution 
			\begin{equation*}
			{\bf f}^{*,n+1}_{\bf i} = {\mathcal{P}}_{\bf i}({\bf c}^{*,n+1}),\quad {\bf i}\in I_{\Gamma^f}.
			\end{equation*}
		}
		\item{At the interface $\Gamma$, the ghost point values in $\wt{\bf c}^{*,n+1}$ are computed by solving the equation for the continuity of traction 
				\begin{equation}\label{traction_gamma_pre}
			\left(\left((\Lambda^{2h}J^{2h}_{\Gamma})\otimes{\bf I}\right)^{-1}\wt{\mathcal{A}}_3^{2h}{\bf c}^{\star,n+1}\right)_{\bf i}
			= {\mathcal{R}}_{\bf i}\Big(\left((\Lambda^hJ_{\Gamma}^h)\otimes{\bf I}\right)^{-1}(\mathcal{A}_3^h{\bf f}^{\star,n+1}-h_3\omega_1{\bm \eta}^{\star,n+1})\Big), {\bf i}\in I_{\Gamma^c}.
			\end{equation}
		}
		\item{Evaluate the acceleration at all grid points 
			\begin{equation*}
			{\wt{\bf a}}_c^n= \frac{\wt{\bf c}^{*,n+1}-2\wt{\bf c}^{n}+\wt{\bf c}^{n-1}}{\Delta^2_t},\ \ \ \
			{{\bf a}}_f^{n} = \frac{{\bf f}^{*,n+1}-2{\bf f}^{n}+{\bf f}^{n-1}}{\Delta^2_t}.
			\end{equation*}
		}
		\item{Compute the corrector at the interior grid points
			\[{\bf c}^{n+1}_{\bf i} = {\bf c}^{*,n+1}_{\bf i} + \frac{\Delta_t^4}{12}\left((\rho^{2h}\otimes{\bf I})(J^{2h}\otimes{\bf I})\right)^{-1}\wt{\mathcal{L}}^{2h}_{\bf i}{\bf a}_c^{n},\quad {\bf i}\in I_{\Omega^c},\]
			\[{\bf f}^{n+1}_{\bf i} = {\bf f}^{*,n+1}_{\bf i} + \frac{\Delta_t^4}{12}\left((\rho^{h}\otimes{\bf I})(J^h\otimes{\bf I})\right)^{-1}\hat{\mathcal{L}}^{h}_{\bf i}{\bf a}_f^{n},\quad {\bf i}\in I_\Omega^f.\]
		}
		\item{At the interface $\Gamma$, the values ${\bf f}^{n+1}_{\bf i}$  are computed by the continuity of solution
			\begin{equation*}
			{\bf f}^{n+1}_{\bf i} = {\mathcal{P}}_{\bf i}({\bf c}^{n+1}), \quad {\bf i}\in I_{\Gamma^f}.
			\end{equation*}
		}
		\item{At the interface $\Gamma$, the ghost point values in $\wt{\bf c}^{n+1}$ are computed by solving the equation for the continuity of traction
			\begin{equation}\label{traction_gamma_corr}
			\left(\left((\Lambda^{2h}J^{2h}_{\Gamma})\otimes{\bf I}\right)^{-1}\wt{\mathcal{A}}_3^{2h}{\bf c}^{n+1}\right)_{\bf i}
			= {\mathcal{R}}_{\bf i}\Big(\left((\Lambda^hJ^h_{\Gamma})\otimes{\bf I}\right)^{-1}(\mathcal{A}_3^h{\bf f}^{n+1}-h_3\omega_1{\bm \eta}^{n+1})\Big), \quad {\bf i}\in I_{\Gamma^c}.
			\end{equation}
		}
	\end{itemize}
\end{breakablealgorithm}
~\\

In the Algorithm \ref{first_alg}, we need to solve the linear systems for the continuity of traction at the interface $\Gamma$ in both predictor step (\ref{traction_gamma_pre}) and corrector step (
\ref{traction_gamma_corr}). The linear system matrices of (\ref{traction_gamma_pre}) and (\ref{traction_gamma_corr}) are the same. Therefore, we only present how to solve (\ref{traction_gamma_pre}) in the predictor step.

There are $3n_1^{2h}n_2^{2h}$ unknowns and $3n_1^{2h}n_2^{2h}$ linear equations in (\ref{traction_gamma_pre}). For large problems in three dimensions, it is very memory inefficient to calculate the LU-factorization. Therefore, we use iterative methods to solve the linear system in (\ref{traction_gamma_pre}). In particular, we consider three different iterative methods: the block Jacobi iterative method, the conjugate gradient (CG) iterative method and the preconditioned conjugate gradient iterative method. The detailed methods and a comparison are given in Section \ref{iterative_section}.

\section{Numerical Experiments}
We present four numerical experiments. 
 In Sec.~\ref{convergence_study}, we verify the order of the convergence of the proposed scheme (\ref{elastic_semi_c}, \ref{fine_scheme}, \ref{continuous_sol}, \ref{continuous_traction}).  In  Sec.~\ref{iterative_section}, we present three iterative methods for solving the linear systems (\ref{traction_gamma_pre}) and (\ref{traction_gamma_corr}). The efficiency of the iterative methods is investigated and a comparison with the LU-factorization method is conducted. Next, in Sec.~\ref{gaussian_source} we show that our schemes generate little reflection at the mesh refinement interface. Finally, the energy conservation property is verified in Sec.~\ref{conserved_energy} with heterogeneous and discontinuous material properties.

\subsection{Verification of convergence rate}\label{convergence_study}
We use the method of the manufactured solution to verify the fourth order convergence rate of the proposed scheme. We choose the mapping of the coarse domain $\Omega^c$ as
\[ {\bf x} = {\bf X}^c({\bf r}) = \left(\begin{array}{c}
2\pi r^{(1)} \\
2\pi r^{(2)} \\
r^{(3)}\theta_i\big(r^{(1)},r^{(2)}\big) + (1-r^{(3)})\theta_b\big(r^{(1)},r^{(2)}\big) \end{array}\right), \]
where $0\leq r^{(1)}, r^{(2)}, r^{(3)}\leq 1$, $\theta_i$ represents the interface surface geometry,
\begin{equation}\label{iterface_geometry}
\theta_i\big(r^{(1)},r^{(2)}\big) = \pi+0.2\sin(4\pi r^{(1)})+0.2\cos(4\pi r^{(2)}),
\end{equation}
and $\theta_b$ is the bottom surface geometry,
\begin{equation*}\label{bottom_geometry}
\theta_b\big(r^{(1)},r^{(2)}\big) = 0.2\exp\left(-\frac{(r^{(1)}-0.6)^2}{0.04}\right)+0.2\exp\left(-\frac{(r^{(2)}-0.6)^2}{0.04}\right).
\end{equation*}
As for the fine domain $\Omega^f$, the mapping is chosen to be
\[ {\bf x} = {\bf X}^f({\bf r}) = \left(\begin{array}{c}
2\pi r^{(1)}\\
2\pi r^{(2)}\\
r^{(3)}\theta_t\big(r^{(1)},r^{(2)}\big) + (1-r^{(3)})\theta_i\big(r^{(1)},r^{(2)}\big) \end{array}\right), \]
where $0\leq r^{(1)}, r^{(2)}, r^{(3)}\leq 1$ and $\theta_t$ is the top surface geometry,
\begin{equation*}\label{top_geometry}
\theta_t\big(r^{(1)},r^{(2)}\big) = 2\pi+0.2\exp\left(-\frac{(r^{(1)}-0.5)^2}{0.04}\right)+0.2\exp\left(-\frac{(r^{(2)}-0.5)^2}{0.04}\right).
\end{equation*}
In the entire domain, we choose the density 
\begin{equation*}\label{density_function}
\rho(x^{(1)},x^{(2)},x^{(3)}) = 2 + \sin(x^{(1)}+0.3)\sin(x^{(2)}+0.3)\sin(x^{(3)}-0.2),
\end{equation*}
and material parameters $\mu, \lambda$ 
\begin{equation*}\label{mu_function}
\mu(x^{(1)},x^{(2)},x^{(3)}) = 3 + \sin(3x^{(1)}+0.1)\sin(3x^{(2)}+0.1)\sin(x^{(3)}),
\end{equation*}
and 
\begin{equation*}\label{lambda_function}
\lambda(x^{(1)},x^{(2)},x^{(3)})  = 21+ \cos(x^{(1)}+0.1)\cos(x^{(2)}+0.1)\sin^2(3x^{(3)}).
\end{equation*}
 In addition, we impose a boundary forcing on the top surface and Dirichlet boundary conditions for the other boundaries. The external forcing, top boundary forcing ${\bf g}$ and initial conditions are chosen such that the solutions for both fine domain ($\bf F$) and coarse domain ($\bf C$) are ${\bf F}(\cdot,t) = {\bf C}(\cdot,t) = {\bf u}(\cdot,t) = (u_1(\cdot,t),u_2(\cdot,t),u_3(\cdot,t))^T$ with
\begin{align*}
u_1(\cdot,t) &= \cos(x^{(1)}+0.3)\sin(x^{(2)}+0.3)\sin(x^{(3)}+0.2)\cos(t^2),\\
u_2(\cdot,t) &= \sin(x^{(1)}+0.3)\cos(x^{(2)}+0.3)\sin(x^{(3)}+0.2)\cos(t^2),\\
u_3(\cdot,t) &= \sin(x^{(1)}+0.2)\sin(x^{(2)}+0.2)\cos(x^{(3)}+0.2)\sin(t).
\end{align*}
For example, for the boundary forcing at the top surface, we have 
\begin{equation*}\label{traction_force}
{\bf g} = (g_1,g_2,g_3)^T = \sum_{i=1}^3\left(\sum_{j = 1}^3 M_{ij}^f\frac{\partial{\bf u}}{\partial x^{(j)}}\right) n^{f,+,i}_3,
\end{equation*}
where, $M_{ij}^f$ and $n^{f,+,i}_3$ are defined in (\ref{M_definition}) and (\ref{outward_normal}), respectively.

The problem is evolved until final time $T = 0.5$. In Table \ref{convergence_rate}, we use $L_2$ to represent the $L^2$ error in the entire domain $\Omega = \Omega^c\cup\Omega^f$. The notations $L_2^f$ and $L_2^c$ represent the $L^2$ error in the fine domain $\Omega^f$ and coarse domain $\Omega^c$, respectively. The convergence rates are shown in the parentheses in Table \ref{convergence_rate}. We observe that the convergence rate is fourth order for all cases. Even though the boundary accuracy of the SBP operator is only second order, the optimal convergence rate is fourth order. For a more detailed analysis of the convergence rate, we refer to \cite{Wang2017, Wang2018b}.  To solve the linear system for ghost point values, we use a block Jacobi iterative method. In the following section, we study two more iterative methods and compare them in terms of the condition number and the number of iterations.

\begin{table}[htb]
	\begin{center}
		\begin{tabular}{|c|c c c|}
			\hline
			$2h_1 = 2h_2 = 2h_3 = 2h$   & $L_2$ & $L_2^f$ & $L_2^c$  \\
			\hline
			$2\pi/24$ &2.2227e-03 ~~~~~~~~ & 8.0442e-04 ~~~~~~~~ & 2.0720e-03 ~~~~~~~~\\
			\hline
			$2\pi/48$ &1.4142e-04 (3.97) & 5.1478e-05 (3.97) & 1.3171e-04 (3.98)\\
			\hline 
			$2\pi/96$ &8.6166e-06 (4.04) & 3.0380e-06 (4.08) & 8.0632e-06 (4.03)\\
			\hline
		\end{tabular}
	\end{center}
	\caption{The $L^2$ error and corresponding convergence rates of the fourth order SBP method.}\label{convergence_rate}
\end{table}

\subsection{Iterative methods}\label{iterative_section}
In this section, we use the same example as in Sec.~\ref{convergence_study}. For the proposed scheme (\ref{elastic_semi_c}, \ref{fine_scheme}, \ref{continuous_sol}, \ref{continuous_traction}), we need to solve linear systems with $3n_1^{2h}n_2^{2h}$ unknown ghost point values on the coarse grid. At each time step, two linear systems with the same matrix are solved for the continuity of traction at the interface $\Gamma$. 

We investigate three iterative methods: the block Jacobi method, the conjugate gradient method  and the preconditioned conjugate gradient method. We note that the coefficient matrix of the linear system arising from the continuity of traction at interface $\Gamma$ is not symmetric for this test problem. However, our experiment shows that both the conjugate gradient method and the preconditioned conjugate gradient method converge.

For the problem proposed in Sec.~\ref{convergence_study}, the structure of the coefficient matrix of the linear system arising in (\ref{continuous_traction}) is shown in Figure \ref{Mass_matrix}, which is determined by the interpolation operator ${\mathcal{P}}$ and restriction operator ${\mathcal{R}}$. In this example, we use $n_1^{2h} = n_2^{2h}=13, n_3^{2h} = 7$. We choose the entries indicated by red color in Figure \ref{Mass_matrix} to be the block Jacobi matrix in the block Jacobi iterative method and the preconditioning matrix in the preconditioned conjugate gradient  method. The absolute error tolerance is set to be $10^{-7}$ for all three iterative methods and $h_1 = h_2 = h_3 = h$.

\begin{table}[htbp]
	\begin{center}
		\begin{tabular}{|c|c c c|}
			\hline
			$2h$   & ~~~~ CG ~~~~& Block Jacobi & Preconditioned CG  \\
			\hline
			$2\pi/24$ &37.78& 24.96& 4.01\\
			\hline
			$2\pi/48$ &38.61 & 25.38 & 2.87\\
			\hline 
			$2\pi/96$ &39.14 &25.43 & 2.25\\
			\hline
		\end{tabular}
	\end{center}
		\caption{The condition number of the matrices in the conjugate gradient method, the block Jacobi method and the preconditioned conjugate gradient method.}\label{condition_number}
\end{table} 
Table \ref{condition_number} shows the condition number of the original coefficient matrix, the block Jacobi matrix and the coefficient matrix after applying the preconditioning matrix. We observe that the condition number for preconditioned conjugate gradient method is smallest and is consistent with the results of iteration number for different iterative methods: there are around $44$ iterations for the conjugate gradient method, $13$ iterations for the block Jacobi method and $9$ iterations for the preconditioned conjugate gradient method.

In comparison, we have also performed an LU factorization for the linear system when the mesh size $2h = 2\pi/96$, and the computation takes 40.6 GB memory. In contrast, with the block Jacobi method, the peak memory usage is only 1.2 GB. For large-scale problems, the memory usage becomes infeasible for the LU factorization. 

\begin{figure}[H]
	\centering
	\includegraphics[width=0.45\textwidth,trim={0.6cm 1cm 1cm 1.2cm}, clip]{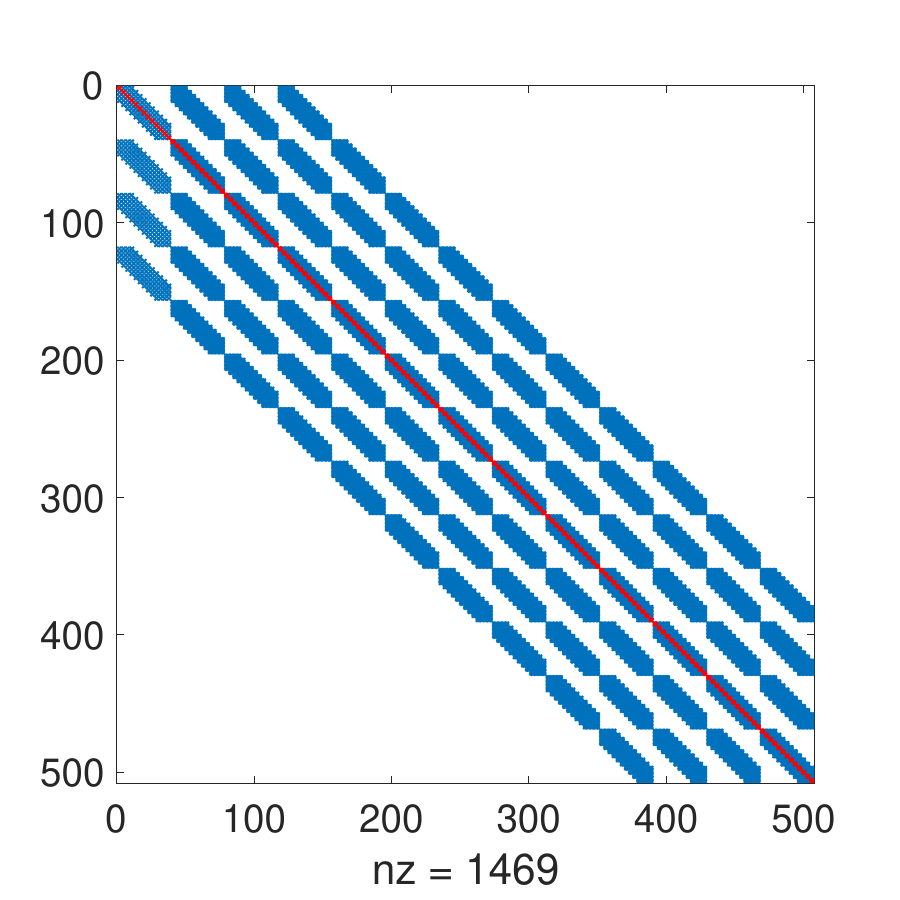}
	\includegraphics[width=0.45\textwidth,trim={0.6cm 1cm 1cm 1.2cm}, clip]{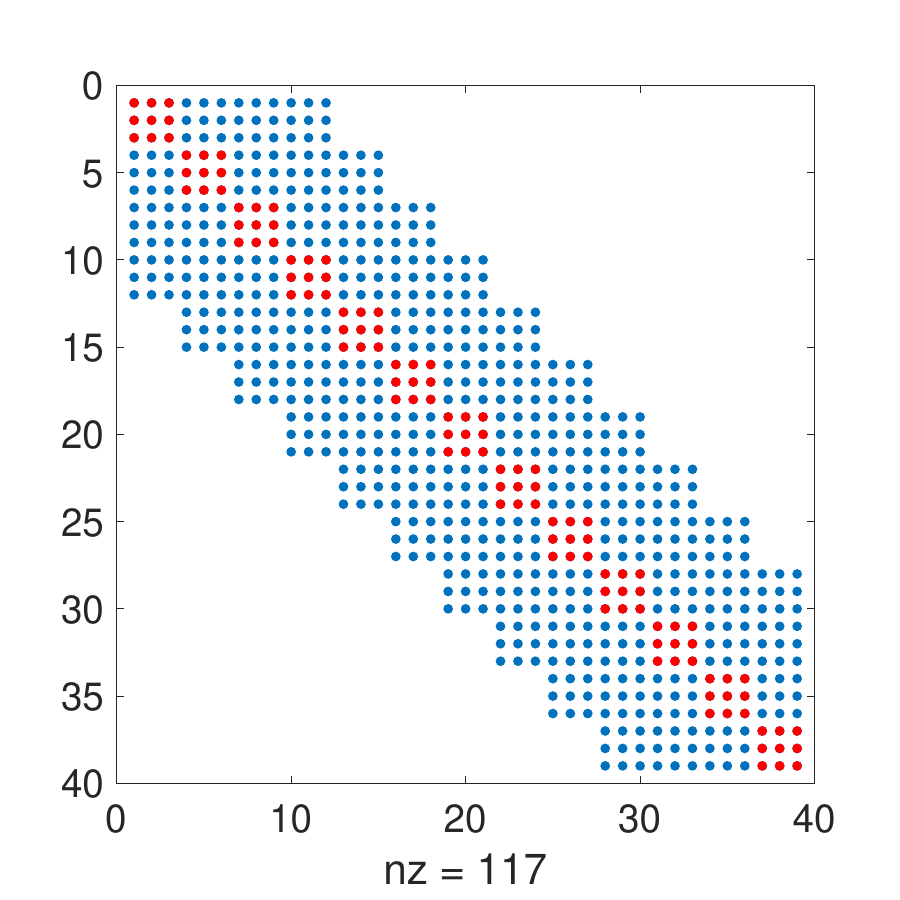}
	\caption{The left panel is the structure of the coefficient matrix of the linear system (\ref{continuous_traction}).  The right panel shows a close-up of one diagonal block.}\label{Mass_matrix}
\end{figure}

\subsection{Gaussian source}\label{gaussian_source}
In this section, we perform a numerical simulation with a Gaussian source at the top surface and verify that the curved mesh refinement interface does not generate any artifacts. 

We choose a flat top and bottom surface geometry 
\begin{equation*}
\theta_t\big(r^{(1)},r^{(2)}\big) = 1000,\quad \theta_b\big(r^{(1)},r^{(2)}\big) = 0,
\end{equation*}
respectively. The mesh refinement interface is parameterized by
\begin{equation}\label{interface_gausian}
\theta_i\big(r^{(1)},r^{(2)}\big) = 800+20\sin(4\pi r^{(1)})+20\cos(4\pi r^{(2)}),
\end{equation}
where $0\leq r^{(1)}, r^{(2)}, r^{(3)}\leq 1$. 
In addition, the mapping in the coarse domain $\Omega^c$ and fine domain $\Omega^f$ are given by 
\[ {\bf x} = {\bf X}^c({\bf r}) = \left(\begin{array}{c}
2000 r^{(1)}\\
2000 r^{(2)}\\
r^{(3)} \theta_i\big(r^{(1)}, r^{(2)}\big) + (1-r^{(3)}) \theta_b\big(r^{(1)},r^{(2)}\big) \end{array}\right) \]
and 
\[ {\bf x} = {\bf X}^f({\bf r}) = \left(\begin{array}{c}
2000 r^{(1)}\\
2000 r^{(2)}\\
r^{(3)}\theta_t\big(r^{(1)},r^{(2)}\big) + (1-r^{(3)})\theta_i\big(r^{(1)},r^{(2)}\big)\end{array}\right), \]
respectively. 
In the entire domain, we use the homogeneous material properties
\begin{equation*}
\rho(x^{(1)},x^{(2)},x^{(3)}) = 1.5\times 10^3,\ \  \mu(x^{(1)},x^{(2)},x^{(3)}) = 1.5\times 10^9,\ \ 
\lambda(x^{(1)},x^{(2)},x^{(3)})  = 3\times 10^9.
\end{equation*}

At the top surface, the Gaussian source 
${\bf g} = (g_1,g_2,g_3)^T$ is imposed as the Dirichlet data with $g_1 = g_2 = 0$ and 
\[g_3 = 10^9 \text{exp}\left(-\left(\frac{t-4/44.2}{1/44.2}\right)^2\right)\text{exp}\left(-\left(\frac{x^{(1)}-1000}{12.5}\right)^2-\left(\frac{x^{(2)}-1000}{12.5}\right)^2\right).\]  
Homogeneous Dirichlet boundary conditions are imposed at other boundaries. Both the initial conditions and the external forcing are set to zero everywhere. 
For these material properties, the shear wave velocity is $c_s = \sqrt{\mu/\rho}=1000$. With the dominant wave frequency $f_0=44.2\sqrt{2}/(2\pi)\approx 10$, the corresponding wavelength $c_s/f_0$ is approximately 100.

In the numerical schemes, we consider three different meshes: Mesh 1 is the Cartesian mesh without any interface and $n_1 = n_2 = 201, n_3 = 101$ with $n_i$ denotes the number of grid points in the direction $x^{(i)}$. This corresponds to 10 grid points per wavelength and is considered as the  reference solution. Mesh 2 is the curvilinear mesh with a curved mesh refinement interface defined in \eqref{interface_gausian} and $n_1^{2h} = n_2^{2h} = 101, n_3^{2h} = 41$, $n_1^h = n_2^h = 201, n_3^h = 21$. The mesh size in $\Omega^f$ is approximately the same as the mesh size in the Cartesian mesh. As a result, the waves are resolved with 5 grid points per wavelength in $\Omega^c$. Mesh 3 is obtained by refining Mesh 2 in all three spatial directions. 

\begin{figure}[htbp]
	\centering
	\includegraphics[width=0.49\textwidth,trim={0.05cm 0.1cm 0.55cm 0.45cm}, clip]{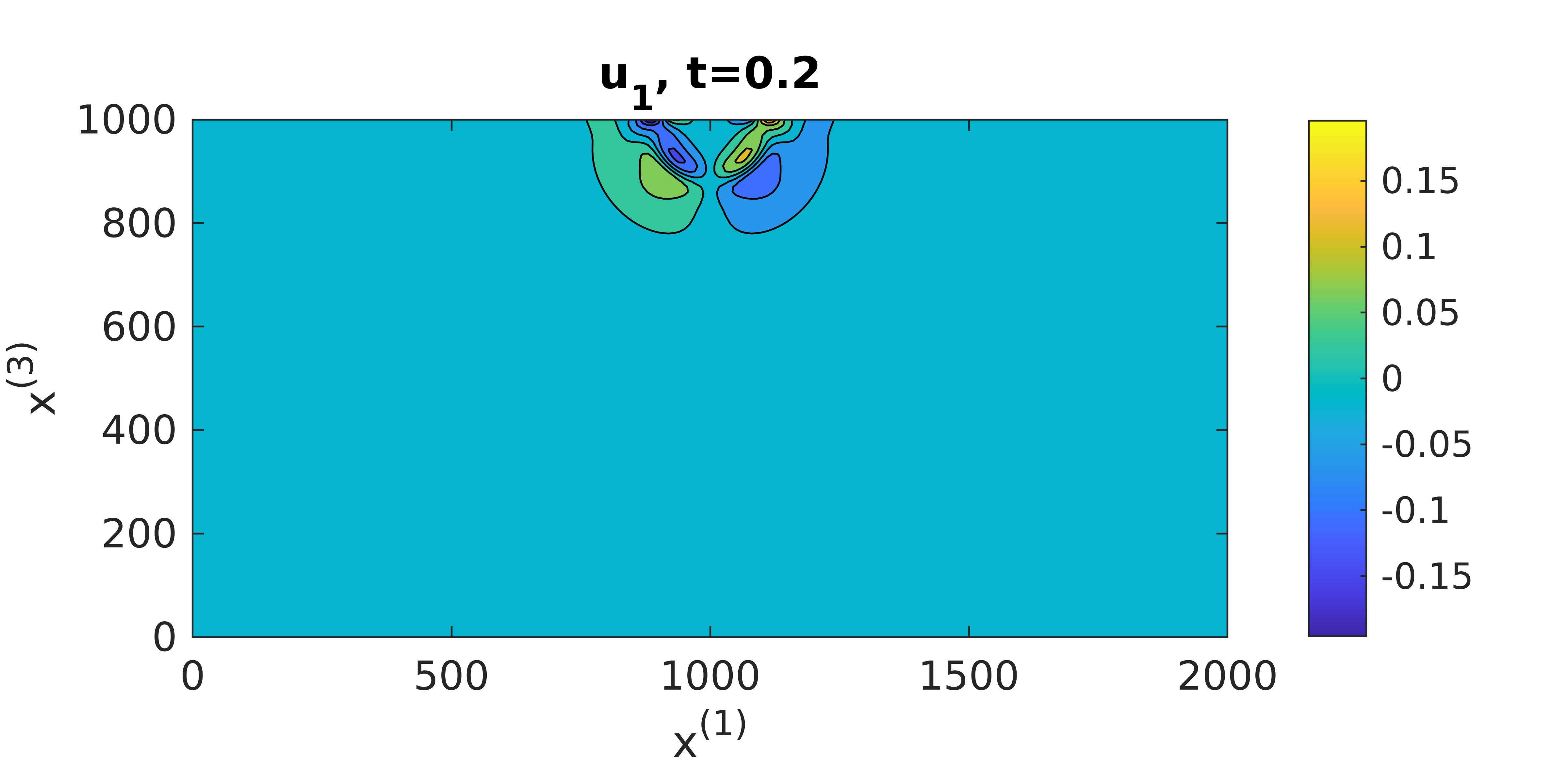}
	\includegraphics[width=0.49\textwidth,trim={0.05cm 0.1cm 0.55cm 0.45cm}, clip]{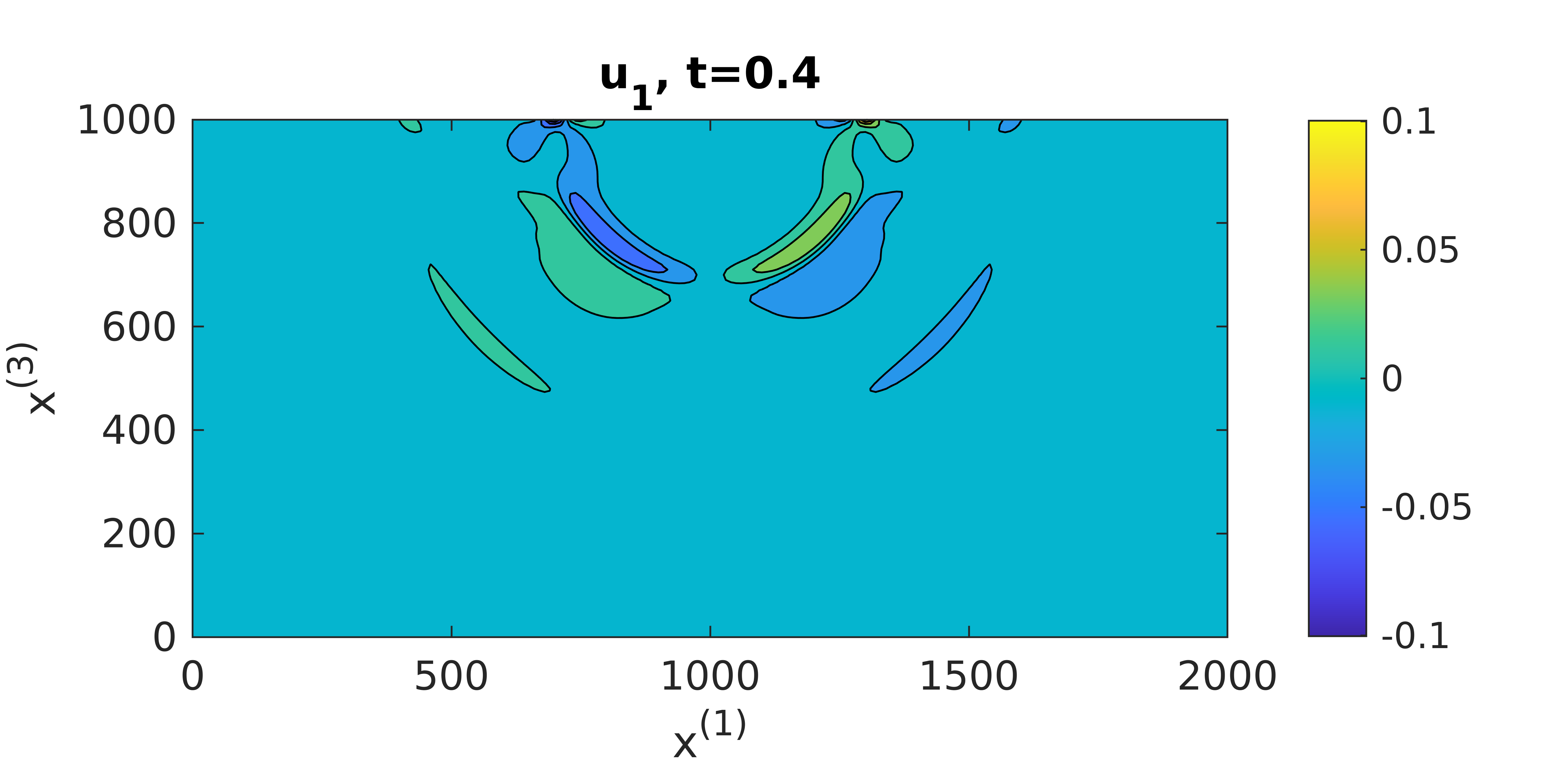}\\
	\includegraphics[width=0.49\textwidth,trim={0.05cm 0.1cm 0.55cm 0.45cm}, clip]{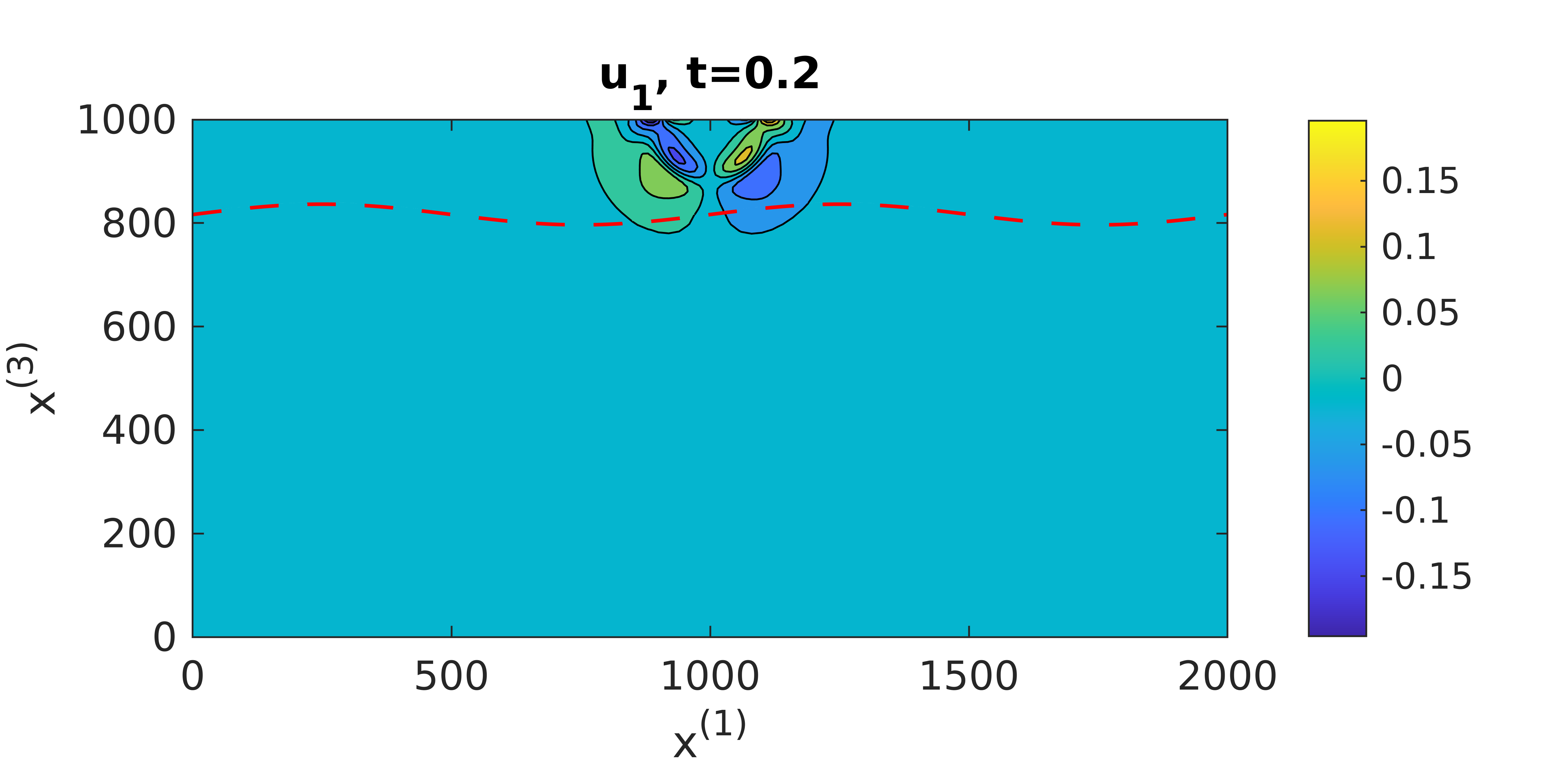}
	\includegraphics[width=0.49\textwidth,trim={0.05cm 0.1cm 0.55cm 0.45cm}, clip]{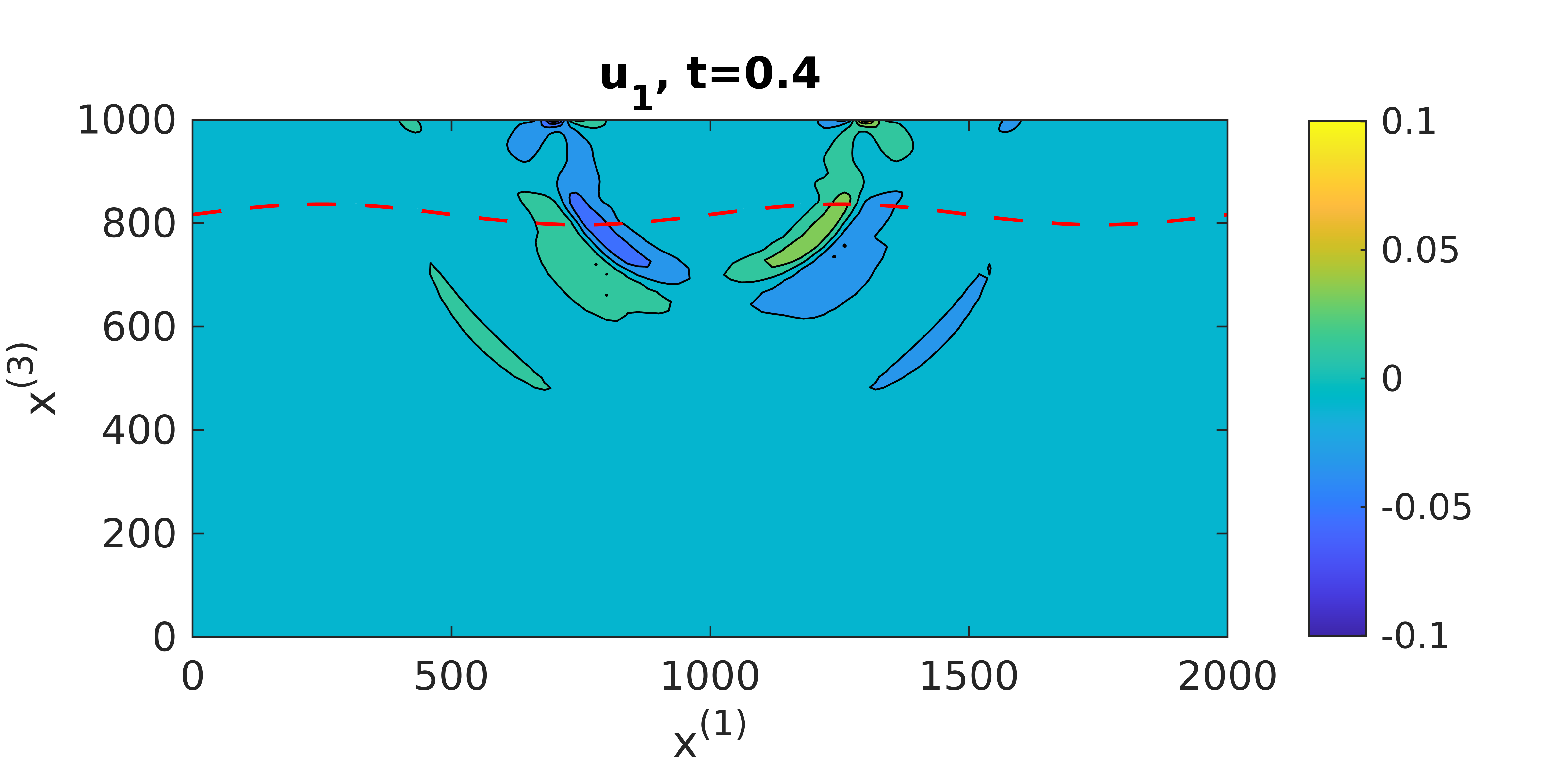}\\
	\includegraphics[width=0.49\textwidth,trim={0.05cm 0.1cm 0.55cm 0.45cm}, clip]{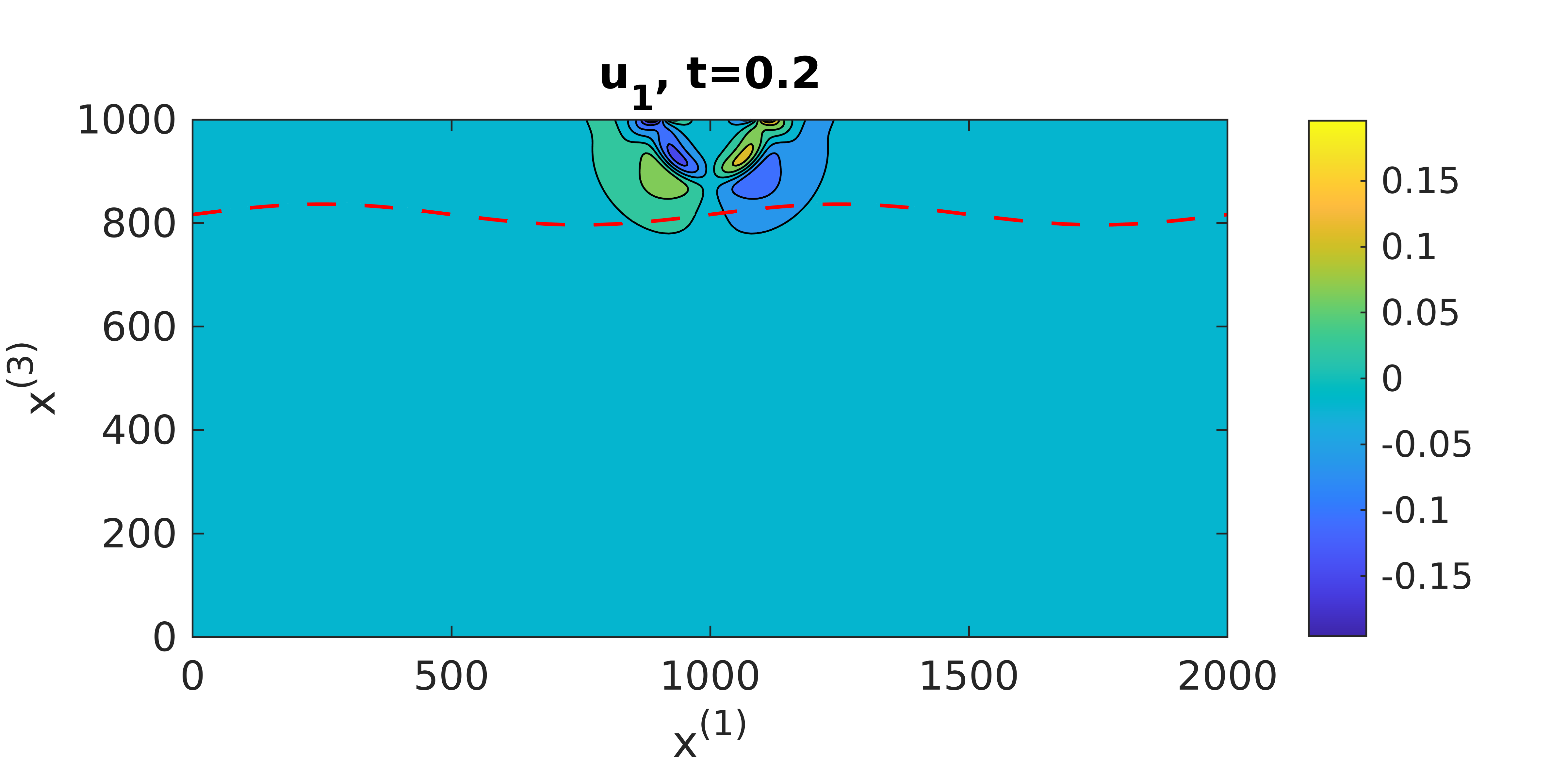}
	\includegraphics[width=0.49\textwidth,trim={0.05cm 0.1cm 0.55cm 0.45cm}, clip]{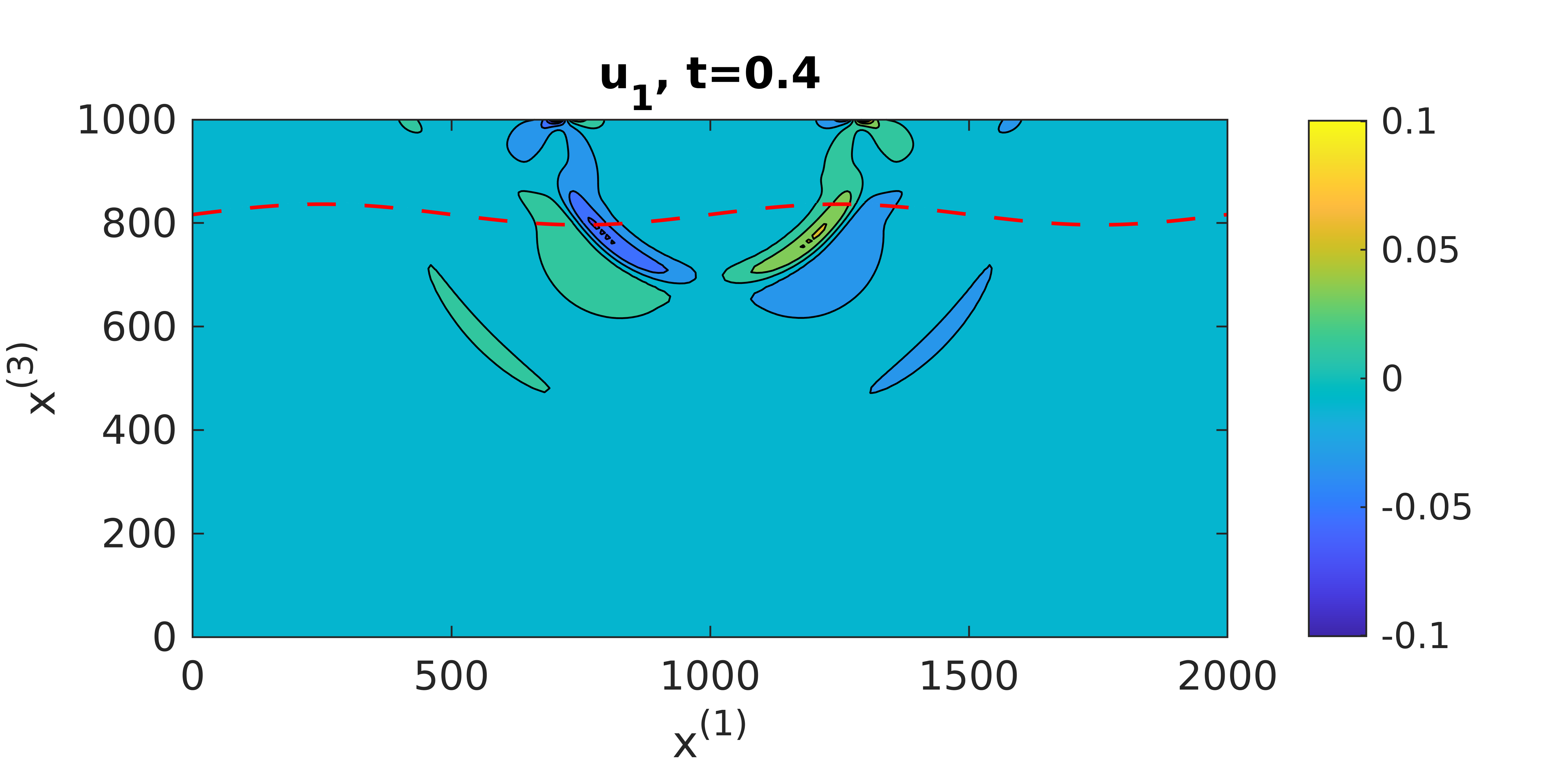}
\caption{The graphs for $u_1$. In the top, middle and bottom panel, we show numerical solutions at $t=0.2$ and $t=0.4$ computed with Mesh 1 (uniform Cartesian grid without any interface), Mesh 2 (curved interface) and Mesh 3 (a refinement of Mesh 2), respectively. The curved interfaces are marked with the red dash lines.}
\label{u1}
\end{figure}


\begin{figure}[htbp]
	\centering
	\includegraphics[width=0.49\textwidth,trim={0.05cm 0.1cm 0.55cm 0.45cm}, clip]{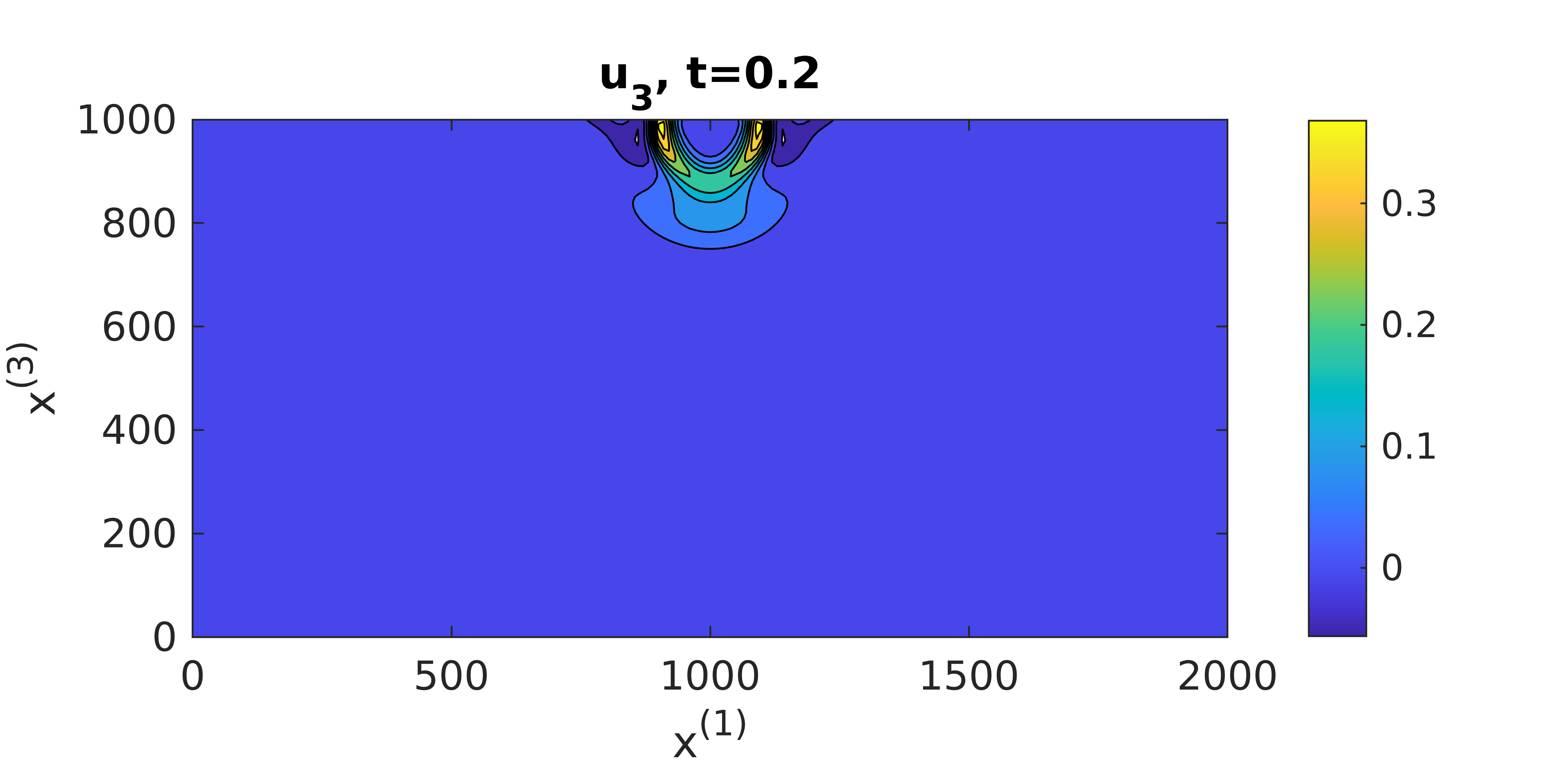}
	\includegraphics[width=0.49\textwidth,trim={0.05cm 0.1cm 0.55cm 0.45cm}, clip]{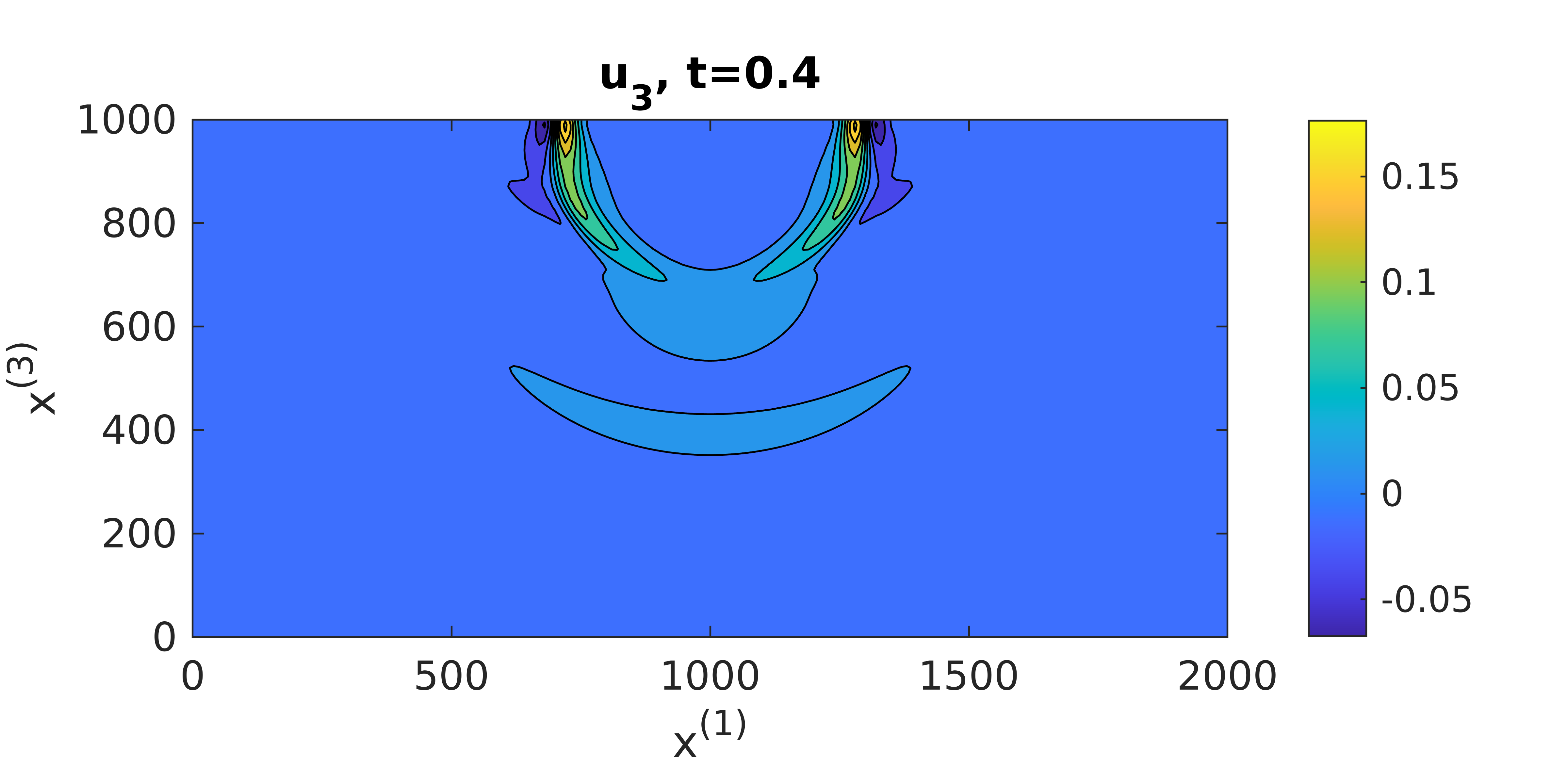}\\
	\includegraphics[width=0.49\textwidth,trim={0.05cm 0.1cm 0.55cm 0.45cm}, clip]{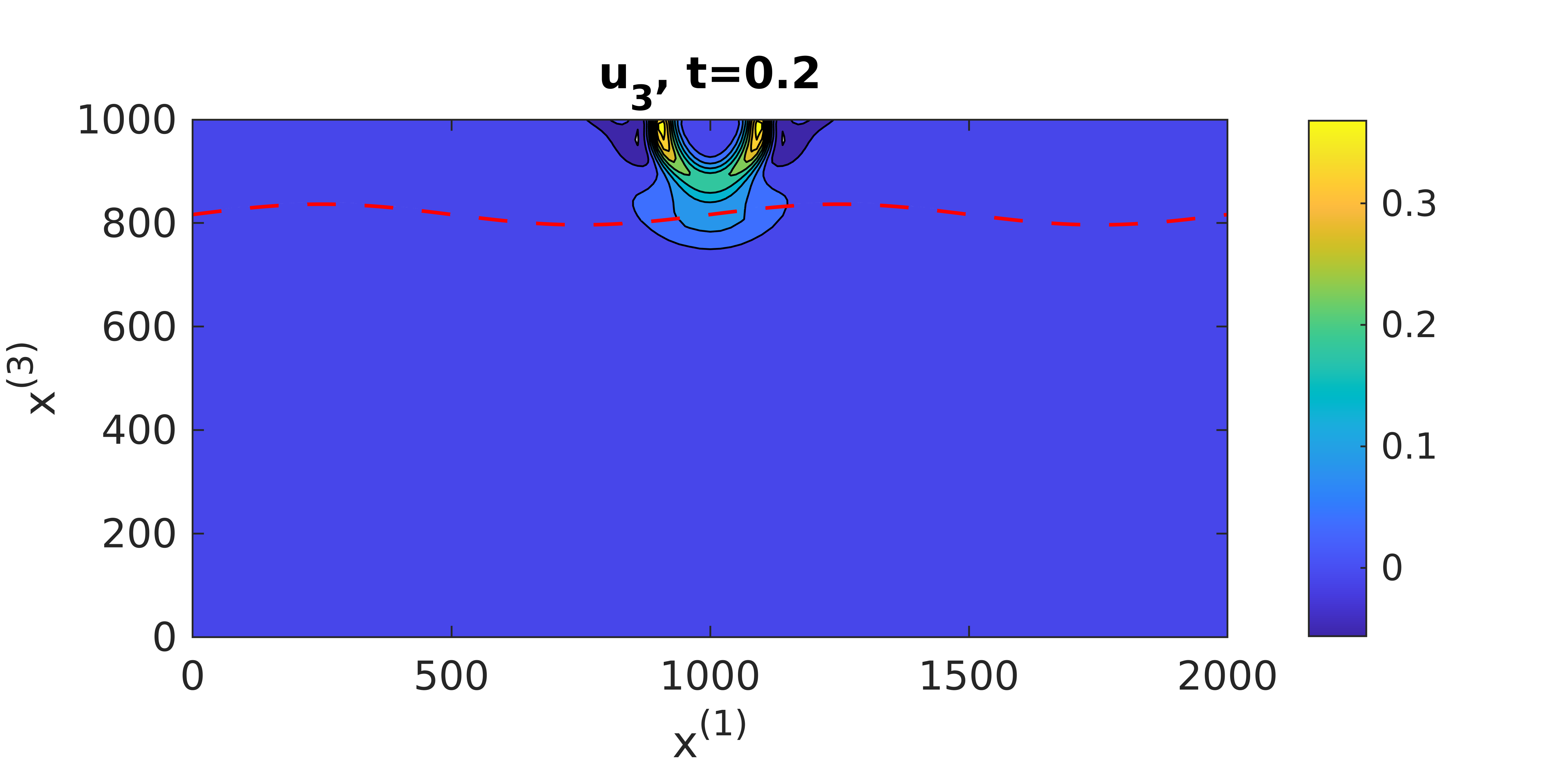}
	\includegraphics[width=0.49\textwidth,trim={0.05cm 0.1cm 0.55cm 0.45cm}, clip]{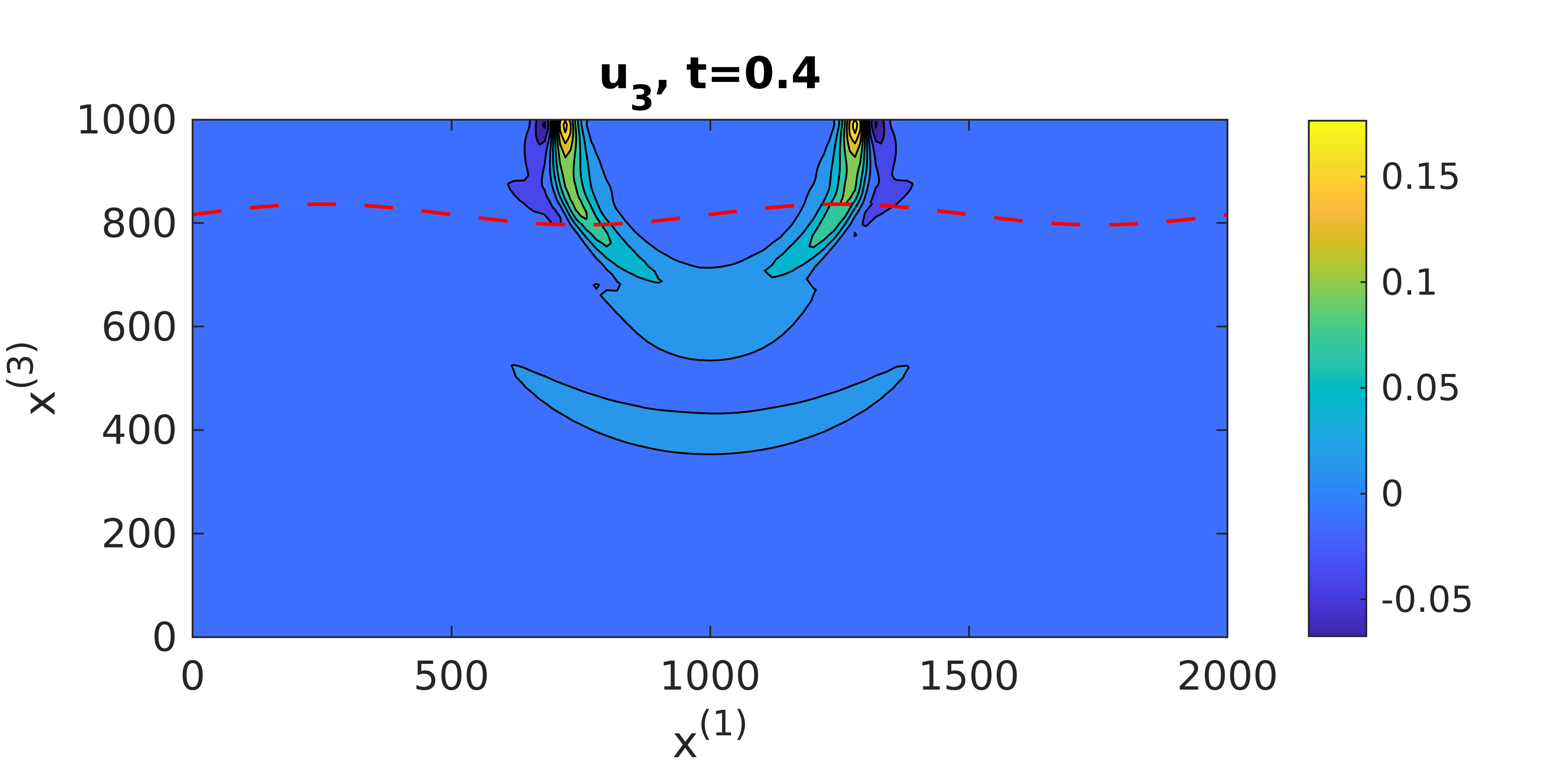}\\
	\includegraphics[width=0.49\textwidth,trim={0.05cm 0.1cm 0.55cm 0.45cm}, clip]{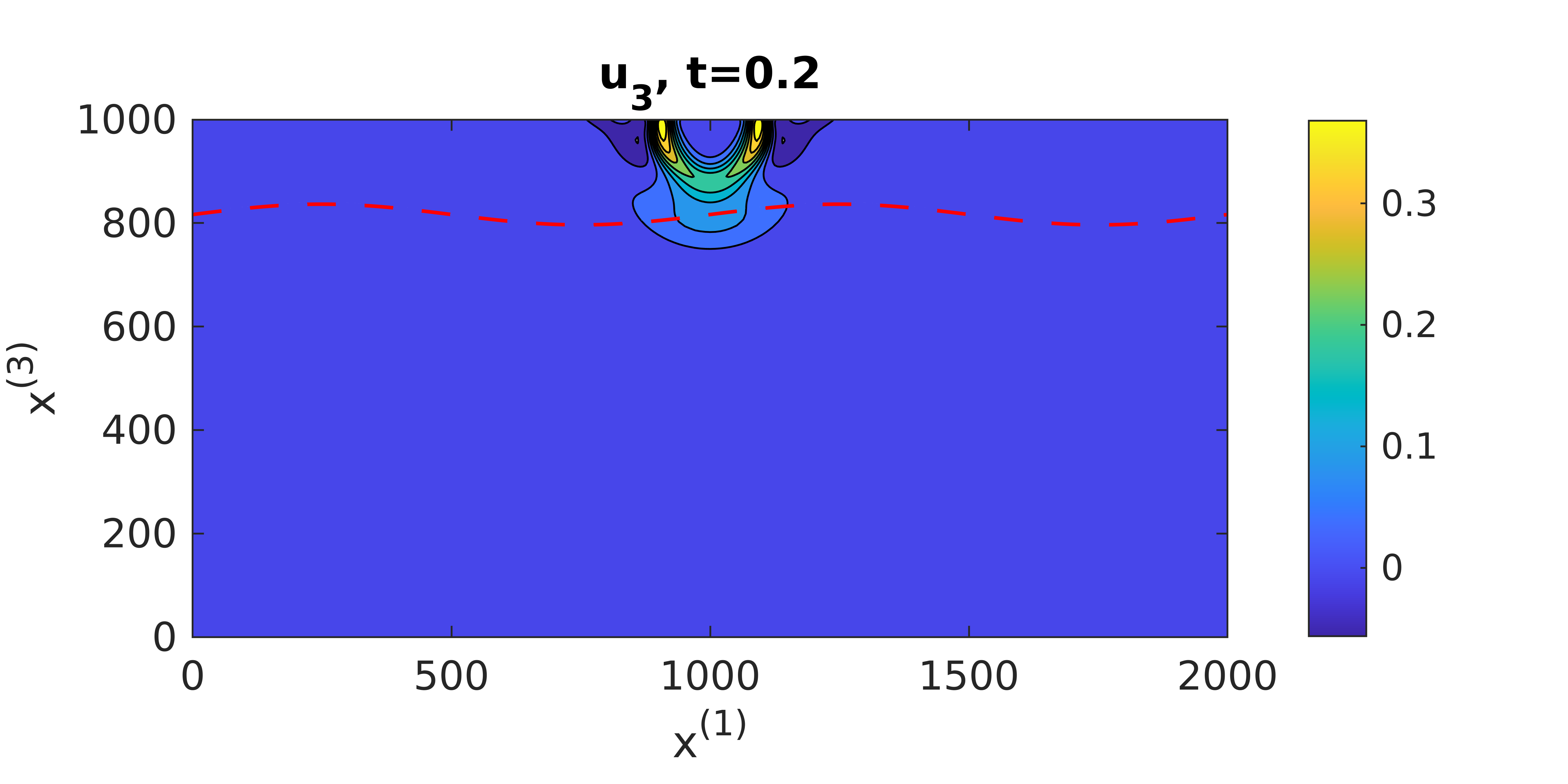}
	\includegraphics[width=0.49\textwidth,trim={0.05cm 0.1cm 0.55cm 0.45cm}, clip]{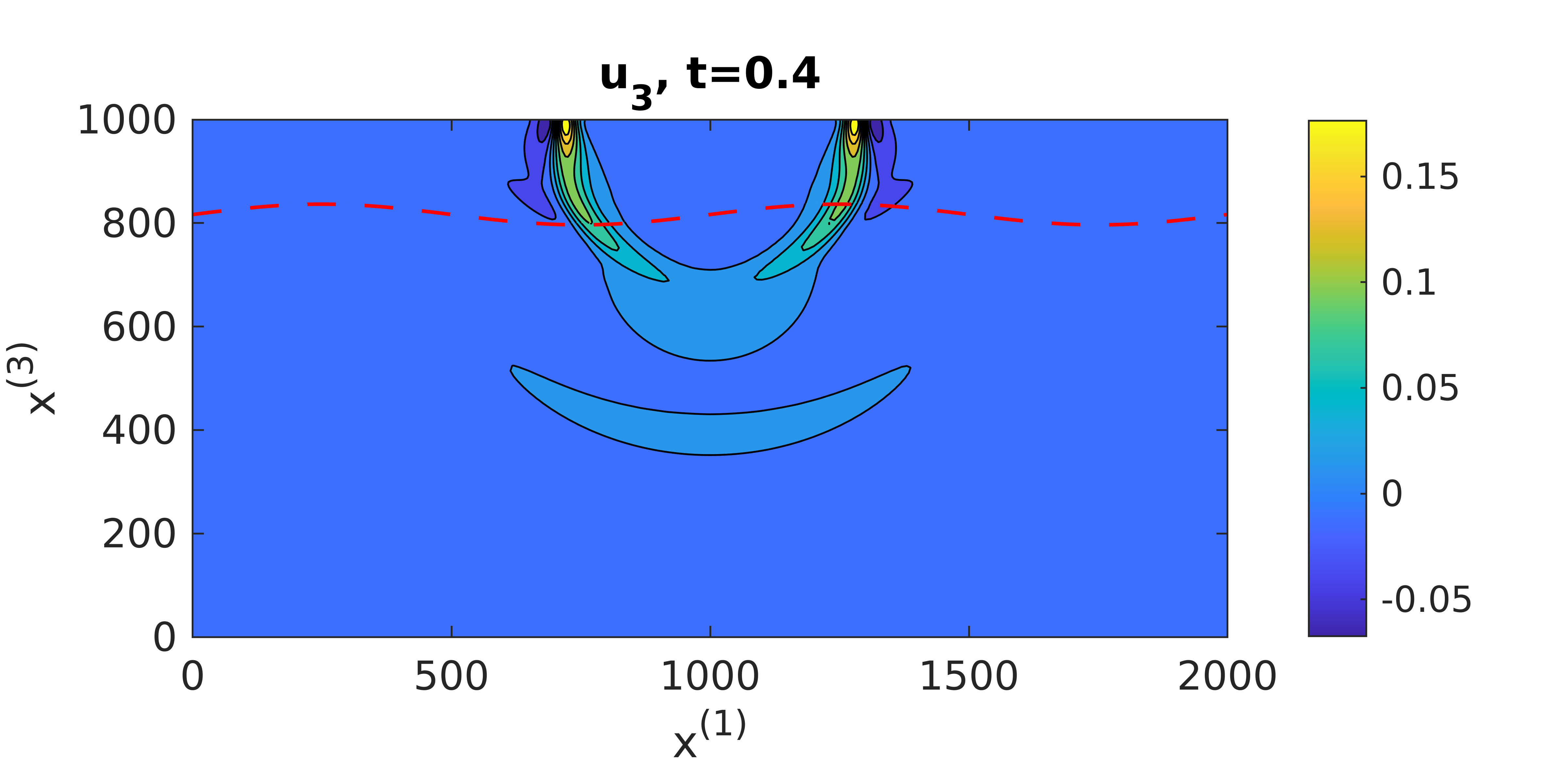}
	\caption{The graphs for $u_3$. In the top, middle and bottom panel, we show numerical solutions at $t=0.2$ and $t=0.4$ computed with Mesh 1 (uniform Cartesian grid without any interface), Mesh 2 (curved interface) and Mesh 3 (a refinement of Mesh 2), respectively. The curved interfaces are marked with the red dash lines.}
\label{u3}
\end{figure}
In Figure \ref{u1}, we plot the component $u_1$ at $t=0.2$ and $t=0.4$.  Some artifacts are observed in the solution computed with the second mesh, which is due to the small number of grid points per wavelength in $\Omega^c$. The results become better when the finer curvilinear mesh is used. From Figure \ref{u3}, we observe that there is no obvious reflection at the mesh refinement interface for the component $u_3$, and we have a better result when a finer curvilinear mesh is used. The component $u_2$ is zero up to round-off error for both the Cartesian mesh and curvilinear meshes and is not presented here.

\subsection{Energy conservation test}\label{conserved_energy}
To verify the energy conservation property of the scheme, we perform computation without external source term, but with a Gaussian initial data centered at the origin of the computational domain. The computational domain is chosen to be the same as in Sec.~\ref{convergence_study}. The material property is heterogeneous and discontinuous: for the fine domain $\Omega^f$, the density varies according to
\[\rho^f(x^{(1)},x^{(2)},x^{(3)}) = 3 + \sin(2x^{(1)}+0.3)\cos(x^{(2)}+0.3)\sin(2x^{(3)}-0.2),\] 
and material parameters satisfy
\[\mu^f(x^{(1)},x^{(2)},x^{(3)}) = 2 + \cos(3x^{(1)}+0.1)\sin(3x^{(2)}+0.1)\sin(x^{(3)})^2,\]
\[\lambda^f(x^{(1)},x^{(2)},x^{(3)}) = 15 + \cos(x^{(1)}+0.1)\sin(4x^{(2)}+0.1)\sin(3x^{(3)})^2;\]
for the coarse domain $\Omega^c$, the density varies according to
\[\rho^c(x^{(1)},x^{(2)},x^{(3)}) = 2 + \sin(x^{(1)}+0.3)\sin(x^{(2)}+0.3)\sin(2x^{(3)}-0.2),\] 
and material parameters satisfy
\[\mu^c(x^{(1)},x^{(2)},x^{(3)}) = 3 + \sin(3x^{(1)}+0.1)\sin(3x^{(2)}+0.1)\sin(x^{(3)}),\]
\[\lambda^c(x^{(1)},x^{(2)},x^{(3)}) = 21 + \cos(x^{(1)}+0.1)\cos(x^{(2)}+0.1)\sin(3x^{(3)})^2.\]
The initial Gaussian data is given by ${\bf C}(\cdot,0) = {\bf F}(\cdot,0) = {\bf u}(\cdot,0) = (u_1(\cdot,0),u_2(\cdot,0),u_3(\cdot,0))^T$ with
\begin{align*}
	u_1(\cdot,0) &= \mbox{exp}\left(-\frac{(x^{(1)}-\pi)^2}{0.1}\right)\mbox{exp}\left(-\frac{(x^{(2)}-\pi)^2}{0.1}\right)\mbox{exp}\left(-\frac{(x^{(3)}-\pi)^2}{0.1}\right),\\
	u_2(\cdot,0) &= \mbox{exp}\left(-\frac{(x^{(1)}-\pi)^2}{0.2}\right)\mbox{exp}\left(-\frac{(x^{(2)}-\pi)^2}{0.2}\right)\mbox{exp}\left(-\frac{(x^{(3)}-\pi)^2}{0.2}\right),\\
	u_3(\cdot,0) &= \mbox{exp}\left(-\frac{(x^{(1)}-\pi)^2}{0.1}\right)\mbox{exp}\left(-\frac{(x^{(2)}-\pi)^2}{0.2}\right)\mbox{exp}\left(-\frac{(x^{(3)}-\pi)^2}{0.2}\right).
\end{align*}
 The grid spacing in the parameter space for the coarse domain $\Omega^c$ is $2h_1 = 2h_2 = 2h_3 = \frac{\pi}{24}$ and for the fine domain $\Omega^f$ is $h_1 = h_2 = h_3 = \frac{\pi}{48}$, that is we have $25\times25\times13$ grid points in the coarse domain $\Omega^c$ and $49\times49\times25$ grid points in the fine domain $\Omega^f$. 

The semi-discrete energy is given by $({\bf f}_t,({\rho}^h\otimes{\bf I}){\bf f}_t)_h + \mathcal{S}_h({\bf f},{\bf f}) + ({\bf c}_t,({\rho}^{2h}\otimes{\bf I}){\bf c}_t)_{2h} + \mathcal{S}_{2h}({\bf c},{\bf c})$, see (\ref{semi_energy_1}). By using the same approach as for the isotropic elastic wave equation, see \cite{petersson2015wave,sjogreen2012fourth},  the expression for the fully discrete energy reads 
\begin{align*}
&E^{n+1/2} = \left|\left|(\rho^h\otimes {\bf I})^{\frac{1}{2}}\frac{{\bf f}^{n+1}-{\bf f}^n}{\Delta t}\right|\right|_h^2 \!+ S_h({\bf f}^{n+1},{\bf f}^n) - \frac{(\Delta t)^2}{12}\Big((J^h\otimes{\bf I})^{-1}\mathcal{L}^h{\bf f}^{n+1},(\rho^h\otimes{\bf I})^{-1}(J^h\otimes{\bf I})^{-1}\mathcal{L}^h{\bf f}^n\Big)_h\\
&+ \left|\left|(\rho^{2h}\otimes{\bf I})^{\frac{1}{2}}\frac{{\bf c}^{n+1}-{\bf c}^n}{\Delta t}\right|\right|_{2h}^2 \!\!\!+ S_{2h}({\bf c}^{n+1},{\bf c}^n) \!-\! \frac{(\Delta t)^2}{12}\Big(\!(J^{2h}\otimes{\bf I})^{-1}\wt{\mathcal{L}}^{2h}{\bf c}^{n+1}\!\!,(\rho^{2h}\otimes{\bf I})^{-1}(J^{2h}\otimes{\bf I})^{-1}\wt{\mathcal{L}}^{2h}{\bf c}^n\!\Big)_{2h}.
\end{align*}
We plot the relative change in the fully discrete energy, $(E^{n+1/2}-E^{1/2})/E^{1/2}$, as a function of time with $t\in[0,120]$ in Figure \ref{discrete_energy}. This corresponds to $6186$ time steps. Clearly, the fully discrete energy remains constant up to the round-off error.
\begin{figure}[htbp]
	\centering
	\includegraphics[width=0.6\textwidth,trim={0cm 0cm 0cm 0cm}, clip]{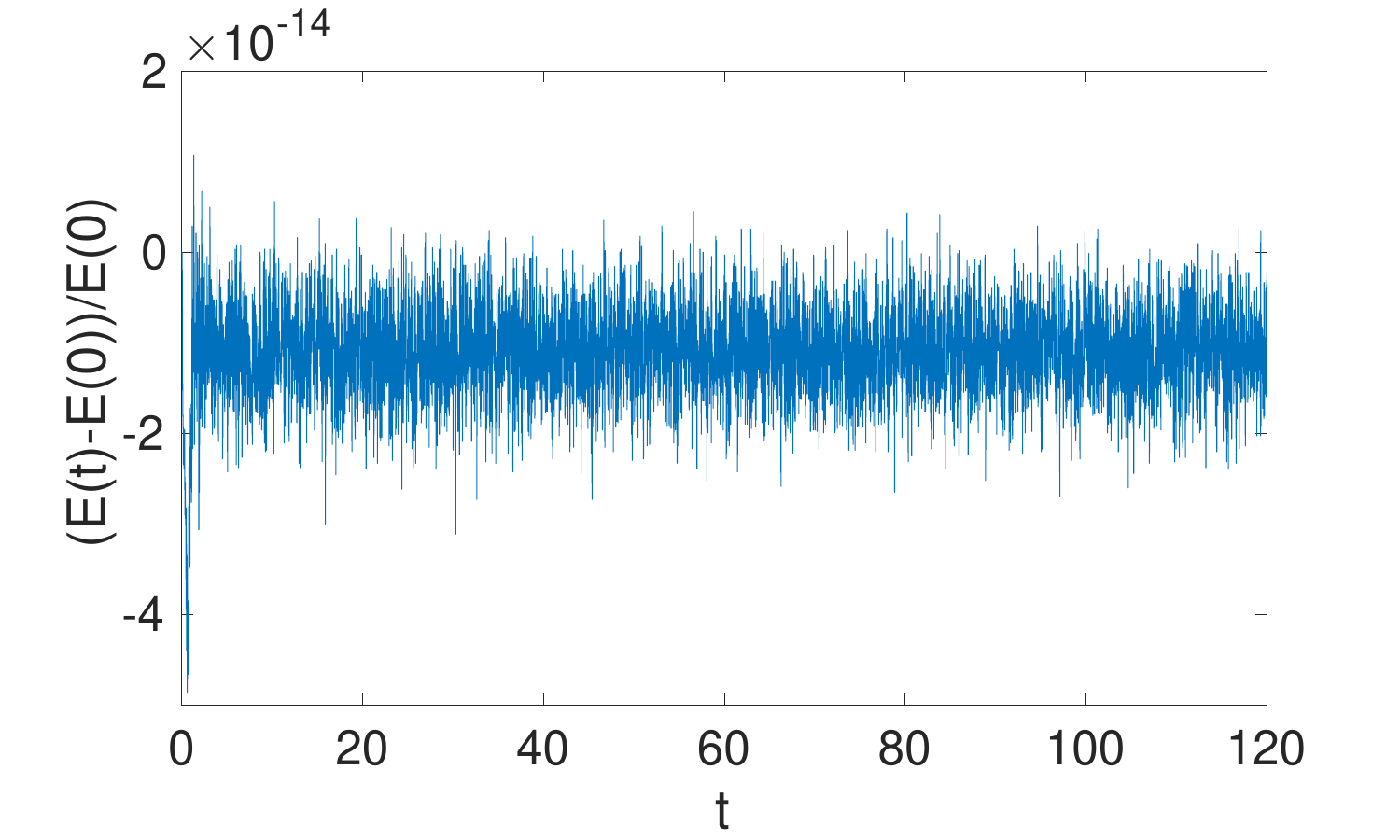}
	\caption{The relative change in the fully discrete energy as a function of time. Here, $t = 120$ corresponds to $6186$ time steps.}\label{discrete_energy}
\end{figure}

\subsection{LOH.1 model problem with layered material}
As the final numerical example, we consider the layer-over-halfspace benchmark problem LOH.1 \cite{Day2001}. The computational domain is taken to be $(x,y,z)\in[0,30000]^2\times[0,17000]$ with a free surface boundary conditions at $z=0$.   The problem is driven by a single point moment source defined as 
$g(t,t_0,\omega) \mathcal{M} \cdot \nabla\delta (\mathbf{x}-\mathbf{x_0})$, 
where the point source location is $\mathbf{x_0}= (15000, 15000, 2000)$  and the moment time function is
\[g(t,t_0,\omega) = \frac{\omega}{\sqrt{2\pi}}e^{-\omega^2(t - t_0)^2/2}, \ \ \ \omega = 16.6667,\ \ \ \ t_0 = 0.36.\]
In the 3-by-3 symmetric moment tensor $\mathcal{M}$, the only nonzero elements are $\mathcal{M}_{12}=\mathcal{M}_{21}=10^{18}$. The center frequency is ${\omega}/{(2\pi)}=2.65$ and the highest significant frequency is estimated to be $2.5{\omega}/{(2\pi)}=6.63$.

The LOH.1 model has a layered material property with a material discontinuity at $z=1000$, with the dynamic and mechanical parameters given in Table \ref{material_parameter}. In the top layer $z\in [0, 1000]$, both the compressional and shear velocity are lower than the rest of the domain. For computational efficiency, a smaller grid spacing shall be used in the top layer. 

\begin{table}[htbp]
	\begin{center}
		\begin{tabular}{c c c c c}
			\hline
			~   & Depth $[m]$& $V_p[m/s]$ & $V_s [m/s]$ & $\rho[Kg/m^3]$ \\
			\hline
			Layer&0--1000& 4000& 2000& 2600\\
			half-space &1000--17000 & 6000 & 3464& 2700\\
			\hline 
		\end{tabular}
	\end{center}
	\caption{Dynamic and mechanical parameters for the layer and the lower half-space of the layer over half-space test.}\label{material_parameter}
\end{table} 

We solve the LOH.1 model problem by using the open source code SW4, where our proposed method has been implemented. The solution is recorded in a receiver on the free surface at the point $(x, y, z) = (21000, 23000, 0)$. The time history of the vertical, transverse and radial velocities are shown in Figure \ref{loh1_100} with grid spacing $h = 100$ in the half-space and $h/2 = 50$ in the top layer. With the highest significant frequency 6.63 Hz, the smallest number of grid points per wavelength is only 5.22. Despite this, we observe  the numerical solutions agree well with the exact solution. In Figure \ref{loh1_50}, the solutions computed on a finer mesh with $h = 50$ in the half-space and $h/2 = 25$ in the top layer look identical to the exact solutions.
\begin{figure}[htbp]
	\centering
	\includegraphics[width=0.9\textwidth,trim={4cm 0.2cm 4cm 1.5cm}, clip]{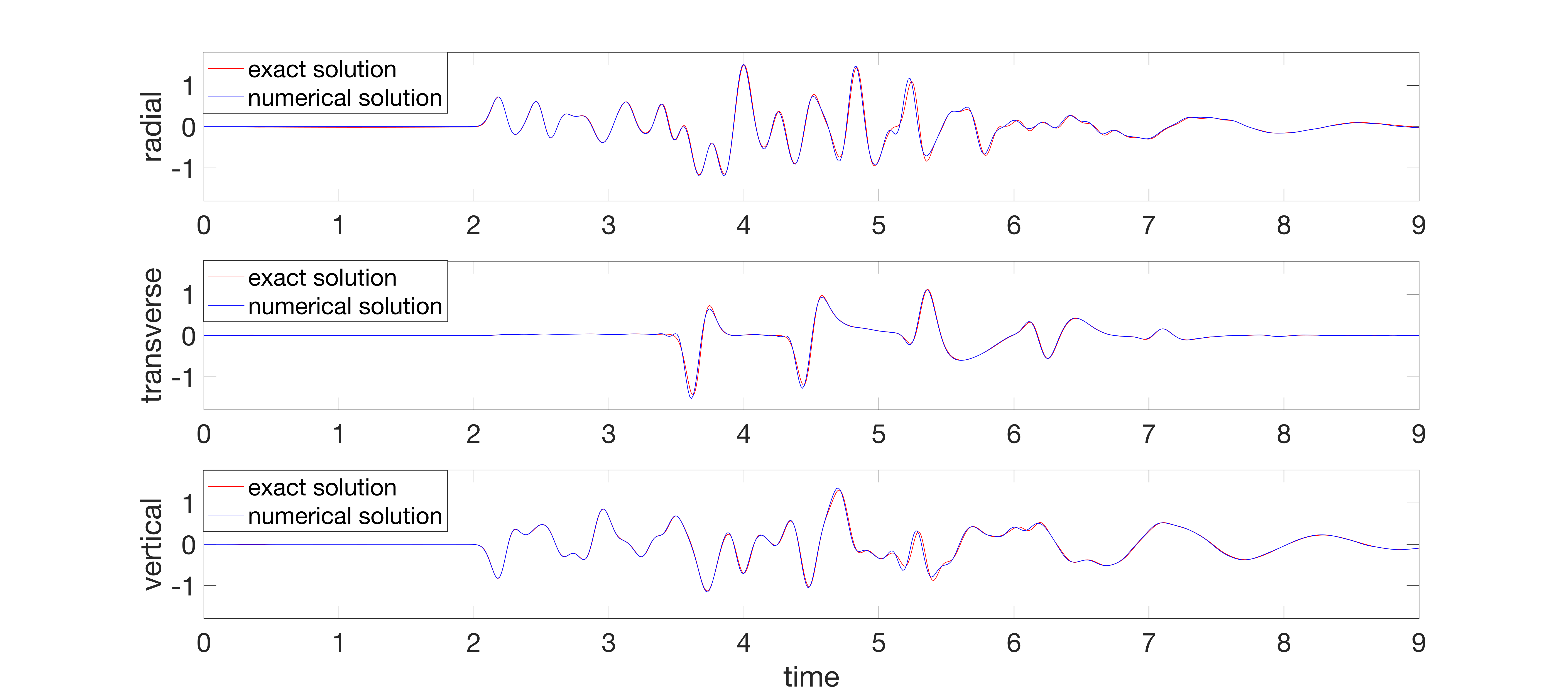}
	\caption{LOH.1: The radial (top), transverse (middle), and vertical (bottom) velocities time histories. Here the numerical solutions are plotted in blue ($h = 100$) and the semi-analytical solution is plotted in red.}\label{loh1_100}
\end{figure}

\begin{figure}[htbp]
	\centering
	\includegraphics[width=0.9\textwidth,trim={4cm 0.2cm 4cm 1.5cm}, clip]{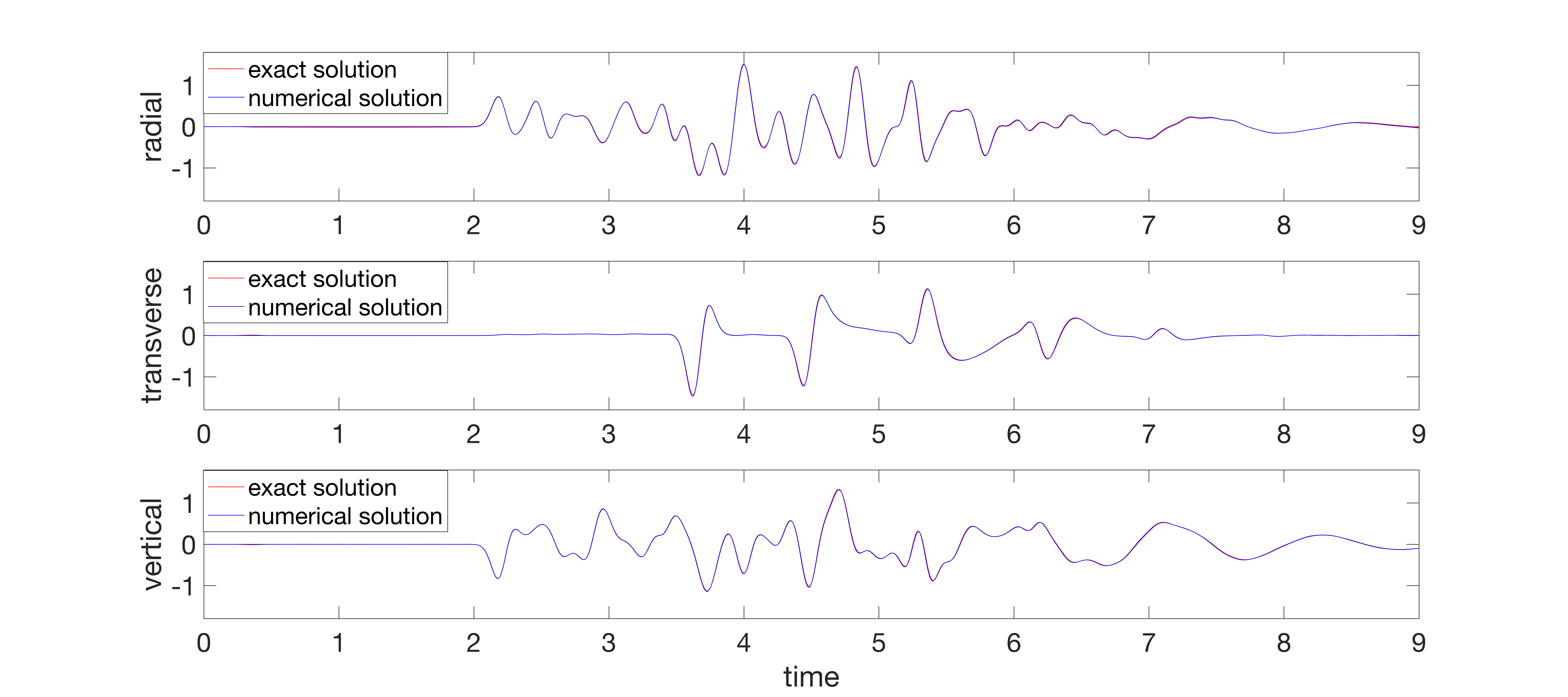}
	\caption{LOH.1: The radial (top), transverse (middle), and vertical (bottom) velocities time histories. Here the numerical solutions are plotted in blue ($h = 50$) and the semi-analytical solution is plotted in red.}\label{loh1_50}
\end{figure}

To test the performance of the new method, we record the quotient between the computational time of solving the linear system for the mesh refinement interface and of the time-stepping procedure in Table \ref{time}. We have run simulations on two different computer clusters. First, we use two nodes on the Rackham cluster with each node consisting of two 10-core Intel Xeon V4 CPUs and 128 GB memory. In the second simulation, we use three nodes on ManeFrame II (M2) with each node consisting of two 18-core Intel Xeon E5-2695 v4 CPUs and 256 GB memory. From Table \ref{time}, we observe that our new method (with ghost points from the coarse domain) needs much less time to solve the linear system for interface conditions compared with the old method in SW4 (with ghost points from both coarse and fine domains). 

\begin{table}[htbp]
	\begin{center}
		\begin{tabular}{|c|c|c|}
			\hline
			Machine   & new method & old method \\
			\hline
			Rackham & 4.02\% &  8.16\%\\
			\hline
			M2 &5.17\% & 8.87\%\\
			\hline 
		\end{tabular}
	\end{center}
		\caption{The quotient of the computational time of solving the linear system for the mesh refinement interface and of the time-stepping procedure.}\label{time}
\end{table} 

In addition, the proposed method implemented in SW4 has excellent parallel scalability. When running the same model problem with 4 nodes (80 cores) on the Rackham cluster, the computational time of the time stepping procedure is $51\%$ of that with 2 nodes. Further increasing to 8 nodes (160 cores), the computational time of the time stepping procedure is $52\%$ of that with 4 nodes.

\section{Conclusion}
We have developed a fourth order accurate finite difference method for the three dimensional elastic wave equations in heterogeneous media. To take into account discontinuous material properties, we partition the domain into subdomains such that interfaces are aligned with material discontinuities such that the material property is smooth in each subdomain. Adjacent subdomains are coupled through physical interface conditions: continuity of displacements and continuity of traction.

In a realistic setting, these subdomains have curved faces. We use a coordinate transformation and discretize the governing equations on curvilinear meshes. In addition, we allow nonconforming mesh refinement interfaces such that the mesh sizes in each block need not to be the same. With this important feature, we can choose the mesh sizes according to the velocity structure of the material and keep the grid points per wavelength almost the same in the entire domain. 

The finite difference discretizations satisfy a summation-by-parts property. At the interfaces, physical interface conditions are imposed by using ghost points and mesh refinement interfaces with hanging nodes are treated numerically by the fourth order interpolation operators. Together with a fourth order accurate predictor-corrector time stepping method, the fully discrete equation is energy conserving. We have conducted numerical experiments to verify the energy conserving property, and the fourth order convergence rate. Furthermore, our numerical experiments indicate that there is little artificial reflection at the interface.

To obtain values of the ghost points, a system of linear equations must be solved. In our formulation, we only use ghost points from the coarse domain, which is more efficient than the traditional approach of using ghost points from both domains.  For large-scale simulations in three dimensions, the LU factorization cannot be used due to memory limitations. We have studied and compared three iterative methods for solving the linear system.

Our proposed method has been implemented in the open source code SW4 \cite{SW4}, which can be used to solve realistic seismic wave propagation problems on large parallel, distributed memory, machines. We have tested the benchmark problem LOH.1 and verified the improved efficiency. 

\section*{Acknowledgement}

This work was performed under the auspices of the U.S. Department of Energy by Lawrence Livermore National Laboratory under Contract DE-AC52-07NA27344. This is contribution LLNL-JRNL-810518. This research was supported by the Exascale Computing Project (ECP), project 17-SC-20-SC, a collaborative effort of two U.S. Department of Energy (DOE) organizations - the Office of Science and the National Nuclear Security Administration.

Part of the computations were performed on resources provided by Swedish National Infrastructure for Computing (SNIC) through Uppsala Multidisciplinary Center for Advanced Computational Science (UPPMAX) under Project SNIC 2019/8-263.

\appendix

\section{Terms in the spatial discretization}\label{appendix_cdomain}
For the first term in (\ref{L_operator}), we have
\begin{align*}
{Q}_l^{2h}({N}_{ll}^{2h}){\bf c} := \left(\begin{array}{c}
({Q}_l^{2h}({N}_{ll}^{2h}){\bf c})_1 \\
({Q}_l^{2h}({N}_{ll}^{2h}){\bf c})_2 \\
({Q}_l^{2h}({N}_{ll}^{2h}){\bf c})_3 
\end{array}\right), \quad ({Q}_l^{2h}({N}_{ll}^{2h}){\bf c})_p = \sum_{q = 1}^{3} Q_l^{2h}(N_{ll}^{2h}(p,q)) {c}^{(q)},\quad p = 1,2,3,
\end{align*}
where we have used a matlab notation $N_{ll}^{2h}(p,q)$ to represent the $p$-th row and $q$-th column of the matrix $N_{ll}^{2h}$; $Q_l^{2h}(N_{ll}^{2h}(p,q)){ c}^{(q)}$ is the central difference operator in direction $r^{(l)}$ for spatial second derivative with variable coefficient. For the second term in (\ref{L_operator}), we have
\begin{align*}
\wt{{G}}_3^{2h}({N}_{33}^{2h}){\bf c} := \left(\begin{array}{c}
(\wt{{G}}_3^{2h}({N}_{33}^{2h}){\bf c})_1 \\
(\wt{{G}}_3^{2h}({N}_{33}^{2h}){\bf c})_2 \\
(\wt{{G}}_3^{2h}({N}_{33}^{2h}){\bf c})_3 
\end{array}\right), \quad (\wt{{G}}_3^{2h}({N}_{33}^{2h}){\bf c})_p = \sum_{q = 1}^{3} \wt{G}_3^{2h}(N_{33}^{2h}(p,q)) {c}^{(q)},\quad p = 1,2,3,
\end{align*}
where $\wt{G}_3^{2h}(N_{33}^{2h}(p,q)) {c}^{(j)}$ is the second derivative SBP operator  defined in (\ref{sbp_2nd_1}) for direction $r^{(3)}$. For the third term in (\ref{L_operator}), we have
\begin{align*}
{D}_l^{2h}({N}_{lm}^{2h}{D}_m^{2h}{\bf c}) := \left(\begin{array}{c}
(D^{2h}_l(N^{2h}_{lm}D_m^{2h}{\bf c}))_1 \\
(D^{2h}_l(N^{2h}_{lm}D_m^{2h}{\bf c}))_2 \\
(D^{2h}_l(N^{2h}_{lm}D_m^{2h}{\bf c}))_3 
\end{array}\right), (D^{2h}_l(N^{2h}_{lm}D_m^{2h}{\bf c}))_p = \sum_{q = 1}^{3} D^{2h}_l(N^{2h}_{lm}(p,q)D_m^{2h}{c}^{(q)}), p = 1,2,3.
\end{align*}
Here, $D_m^{2h}c^{(q)}$ is a central difference operator in direction $r^{(m)}$ for the spatial first derivative, and $D_3^{2h}c^{(q)}$ is the SBP operator defined in (\ref{first_sbp}) for direction $r^{(3)}$.

For the second term in (\ref{Lf_operator}), we have
\begin{align*}
{{G}}_3^{h}({N}_{33}^{h}){\bf f} := \left(\begin{array}{c}
({{G}}_3^{h}({N}_{33}^{h}){\bf f})_1 \\
({{G}}_3^{h}({N}_{33}^{h}){\bf f})_2 \\
({{G}}_3^{h}({N}_{33}^{h}){\bf f})_3 
\end{array}\right), \quad ({{G}}_3^{h}({N}_{33}^{h}){\bf f})_p = \sum_{q = 1}^{3} {G}_3^{h}(N_{33}^{h}(p,q)) {f}^{(q)},\quad p = 1,2,3.
\end{align*}
Here, ${G}_3^{h}(N_{33}^{h}(p,q)) {f}^{(q)}$ is the SBP operator defined in (\ref{sbp_2nd_2}) for direction $r^{(3)}$. 

For the continuity of traction (\ref{continuous_traction}), we have \[\wt{\mathcal{A}}^{2h}_3{\bf c} = N_{31}^{2h}D_1^{2h}{\bf c}+N_{32}^{2h}D_2^{2h}{\bf c}+N_{33}^{2h}\wt{\mathcal D}_3^{2h}{\bf c},\]
where
\begin{align*}
{N}_{3l}^{2h}D_l^{2h}{\bf c} := \left(\begin{array}{c}
({N}_{3l}^{2h}D_l^{2h}{\bf c})_1 \\
({N}_{3l}^{2h}D_l^{2h}{\bf c})_2 \\
({N}_{3l}^{2h}D_l^{2h}{\bf c})_3 
\end{array}\right), \quad ({N}_{3l}^{2h}D_l^{2h}{\bf c})_p = \sum_{q = 1}^{3} N_{3l}^{2h}(p,q)D_l^{2h}{c}^{(q)},\quad l = 1,2, \quad p = 1,2,3
\end{align*}
with $D_l^{2h}{c}^{(q)}$ to be a central difference operator for first spatial derivative in direction $r^{(l)}$, and
\begin{align*}
{N}_{33}^{2h}\wt{\mathcal{D}}_3^{2h}{\bf c} := \left(\begin{array}{c}
({N}_{33}^{2h}\wt{\mathcal{D}}_3^{2h}{\bf c})_1 \\
({N}_{33}^{2h}\wt{\mathcal{D}}_3^{2h}{\bf c})_2 \\
({N}_{33}^{2h}\wt{\mathcal{D}}_3^{2h}{\bf c})_3 
\end{array}\right), \quad ({N}_{33}^{2h}\wt{\mathcal{D}}_3^{2h}{\bf c})_p = \sum_{q = 1}^{3} N_{33}^{2h}(p,q)\wt{\mathcal{D}}_3^{2h}{c}^{(q)},\quad p = 1,2,3
\end{align*}
with $\wt{\mathcal{D}}_3^{2h}{c}^{(q)}$ to be the difference operator for first spatial derivative in direction $r^{(3)}$ defined as in the second equation of (\ref{sbp_1st_1}); and 
\[\mathcal{A}_3^h{\bf f} = N_{31}^hD_1^h{\bf f}+N_{32}^hD_2^h{\bf f}+N_{33}^h\mathcal{D}_3^h{\bf f},\]
where
\begin{align*}
{N}_{33}^h\mathcal{D}_3^h{\bf f} := \left(\begin{array}{c}
({N}_{33}^h\mathcal{D}_3^h{\bf f})_1 \\
({N}_{33}^h\mathcal{D}_3^h{\bf f})_2 \\
({N}_{33}^h\mathcal{D}_3^h{\bf f})_3 
\end{array}\right), \quad ({N}_{33}^h\mathcal{D}_3^h{\bf f})_p = \sum_{q = 1}^{3} N_{33}^h(p,q)\mathcal{D}_3^h{f}^{(q)},\quad p = 1,2,3
\end{align*}
with $\mathcal{D}_3^h{f}^{(q)}$ to be the SBP operator for first spatial derivative in direction $r^{(3)}$ defined as in the first equation of (\ref{sbp_1st_2}). And $N_{3l}^hD_l^h{\bf f}, l=1,2$ are defined similar as those in $\wt{\mathcal{A}}_3^{2h}{\bf c}$.

\section{Bilinear form}\label{appendix_bf}
The term $\mathcal{S}_{2h}({\bf c}_t,{\bf c})$ in \eqref{coarse_simple} is given by
\begin{multline*}
\mathcal{S}_{2h}({\bf c}_t,{\bf c}) = ({D}_1^{2h}{\bf c}_t,{N}_{11}^{2h}{D}_1^{2h}{\bf c})_{2h} +  ({D}_1^{2h}{\bf c}_t,{N}_{12}^{2h}{D}_2^{2h}{\bf c})_{2h} +  ({D}_1^{2h}{\bf c}_t,{N}_{13}^{2h}{D}_3^{2h}{\bf c})_{2h}\\
+  ({D}_2^{2h}{\bf c}_t,{N}_{21}^{2h}{D}_1^{2h}{\bf c})_{2h} 
+  ({D}_2^{2h}{\bf c}_t,{N}_{22}^{2h}{D}_2^{2h}{\bf c})_{2h} +  ({D}_2^{2h}{\bf c}_t,{N}_{23}^{2h}{D}_3^{2h}{\bf c})_{2h} \\
+  ({D}_3^{2h}{\bf c}_t,{N}_{31}^{2h}{D}_1^{2h}{\bf c})_{2h} 
+  ({D}_3^{2h}{\bf c}_t,{N}_{32}^{2h}{D}_2^{2h}{\bf c})_{2h} +  ({D}_3^{2h}{\bf c}_t,{N}_{33}^{2h}{D}_3^{2h}{\bf c})_{2h}\\
+ ({\bf c}_t, P^{2h}_1(N_{11}^{2h}){\bf c})_{2hr} + ({\bf c}_t, P^{2h}_2(N_{22}^{2h}){\bf c})_{2hr} + ({\bf c}_t, P^{2h}_3(N_{33}^{2h}){\bf c})_{2hr},
\end{multline*}
where $P_3^{2h}(N_{33}^{2h})$ is a positive semi-definite operator defined in \eqref{sbp_2nd_1} for direction $r^{(3)}$; $P_1^{2h}(N_{11}^{2h}), P_2^{2h}(N_{22}^{2h})$ are analogue to $P_3^{2h}(N_{33}^{2h})$. 

The term $\mathcal{S}_{h}({\bf f}_t,{\bf f})$ is defined as
\begin{multline*}
\mathcal{S}_{h}({\bf f}_t,{\bf f}) = ({D}_1^{h}{\bf f}_t,{N}_{11}^{h}{D}_1^h{\bf f})_{h} +  ({D}_1^{h}{\bf f}_t,{N}_{12}^{h}{D}_2^h{\bf f})_{h} +  ({D}_1^{h}{\bf f}_t,{N}_{13}^{h}{D}_3^h{\bf f})_{h}\\
+  ({D}_2^{h}{\bf f}_t,{N}_{21}^{h}{D}_1^h{\bf f})_{h} 
+  ({D}_2^{h}{\bf f}_t,{N}_{22}^{h}{D}_2^h{\bf f})_{h} +  ({D}_2^{h}{\bf f}_t,{N}_{23}^{h}{D}_3^h{\bf f})_{h} \\
+  ({D}_3^{h}{\bf f}_t,{N}_{31}^{h}{D}_1^h{\bf f})_{h} 
+  ({D}_3^{h}{\bf f}_t,{N}_{32}^{h}{D}_2^h{\bf f})_{h} +  ({D}_3^{h}{\bf f}_t,{N}_{33}^{h}{D}_3^h{\bf f})_{h}\\
+ ({\bf f}_t, P^{h}_1(N_{11}^h){\bf f})_{hr} + ({\bf f}_t, P^{h}_2(N_{22}^h){\bf f})_{hr} + ({\bf f}_t, P^{h}_3(N_{33}^h){\bf f})_{hr}.
\end{multline*}
Here, $P_l^h(N_{ll}^h)$ are defined similar as $P_l^{2h}(N_{ll}^{2h})$ in $\mathcal{S}_{2h}({\bf c}_t,{\bf c})$.

\bibliography{lu}

\begin{thebibliography}{10}

\bibitem{Almquist2019}
{\sc M.~Almquist, S.~Wang, and J.~Werpers}, {\em Order-preserving interpolation
  for summation-by-parts operators at nonconforming grid interfaces}, SIAM J.
  Sci. Comput., 41 (2019), pp.~A1201--A1227.

\bibitem{Carpenter1994}
{\sc M.~H. Carpenter, D.~Gottlieb, and S.~Abarbanel}, {\em Time--stable
  boundary conditions for finite--difference schemes solving hyperbolic
  systems: methodology and application to high--order compact schemes}, J.
  Comput. Phys., 111 (1994), pp.~220--236.

\bibitem{Collino2003}
{\sc F.~Collino, T.~Fouquet, and P.~Joly}, {\em A conservative space-time mesh
  refinement method for the 1-{D} wave equation. {P}art {II}: {A}nalysis},
  Numer. Math., 95 (2003), pp.~223--251.

\bibitem{Day2001}
{\sc S.~M. Day, J.~Bielak, D.~Dreger, S.~Larsen, R.~Graves, A.~Pitarka, and
  K.~B. Olsen}, {\em Test of 3d elastodynamic codes: Lifelines project task
  1a01}, Technical report, Pacific Earthquake Engineering Center,  (2001).

\bibitem{Duru2016}
{\sc K.~Duru and E.~M. Dunham}, {\em Dynamic earthquake rupture simulations on
  nonplanar faults embedded in 3{D} geometrically complex, heterogeneous
  elastic solids}, J. Comput. Phys., 305 (2016), pp.~185--207.

\bibitem{duru2014stable}
{\sc K.~Duru and K.~Virta}, {\em Stable and high order accurate difference
  methods for the elastic wave equation in discontinuous media}, J. Comput.
  Phys., 279 (2014), pp.~37--62.

\bibitem{Hagstrom2012}
{\sc T.~Hagstrom and G.~Hagstrom}, {\em Grid stabilization of high--order
  one--sided differencing {II}: second--order wave equations}, J. Comput.
  Phys., 231 (2012), pp.~7907--7931.

\bibitem{Kozdon2013}
{\sc J.~E. Kozdon, E.~M. Dunham, and J.~Nordstr\"{o}m}, {\em Simulation of
  dynamic earthquake ruptures in complex geometries using high-order fnite
  difference methods}, J. Sci. Comput., 55 (2013), pp.~92--124.

\bibitem{Kreiss1972}
{\sc H.~O. Kreiss and J.~Oliger}, {\em {C}omparison of accurate methods for the
  integration of hyperbolic equations}, Tellus, 24 (1972), pp.~199--215.

\bibitem{Kreiss1974}
{\sc H.~O. Kreiss and G.~Scherer}, {\em Finite element and finite difference
  methods for hyperbolic partial differential equations}, Mathematical Aspects
  of Finite Elements in Partial Differential Equations, Symposium Proceedings,
  (1974), pp.~195--212.

\bibitem{Lundquist2018}
{\sc T.~Lundquist, A.~Malan, and J.~Nordstr\"om}, {\em A hybrid framework for
  coupling arbitrary summation-by-parts schemes on general meshes}, J. Comput.
  Phys., 362 (2018), pp.~49--68.

\bibitem{Mattsson2012}
{\sc K.~Mattsson}, {\em Summation by parts operators for finite difference
  approximations of second--derivatives with variable coefficient}, J. Sci.
  Comput., 51 (2012), pp.~650--682.

\bibitem{OReilly2020}
{\sc O.~O'Reilly and N.~A. Petersson}, {\em Energy conservative {SBP}
  discretizations of the acoustic wave equation in covariant form on staggered
  curvilinear grids}, J. Comput. Phys., 411 (2020), p.~109386.

\bibitem{petersson2015wave}
{\sc N.~A. Petersson and B.~Sj{\"o}green}, {\em Wave propagation in anisotropic
  elastic materials and curvilinear coordinates using a summation-by-parts
  finite difference method}, J. Comput. Phys., 299 (2015), pp.~820--841.

\bibitem{SW4}
{\sc N.~A. Petersson and B.~Sj\"ogreen}, {\em User's guide to sw4, version
  2.0}, Tech. report LLNLSM-741439, Lawrence Livermore National Laboratory
  (source code available from http://geodynamics.org/cig),  (2016).

\bibitem{sjogreen2012fourth}
{\sc B.~Sj{\"o}green and N.~A. Petersson}, {\em A fourth order accurate finite
  difference scheme for the elastic wave equation in second order formulation},
  J. Sci. Comput., 52 (2012), pp.~17--48.

\bibitem{Strand1994}
{\sc B.~Strand}, {\em Summation by parts for finite difference approximations
  for d/dx}, J. Comput. Phys., 110 (1994), pp.~47--67.

\bibitem{thompson1985numerical}
{\sc J.~F. Thompson, Z.~U. Warsi, and C.~W. Mastin}, {\em Numerical grid
  generation: foundations and applications}, vol.~45, North-holland Amsterdam,
  1985.

\bibitem{Wang2017}
{\sc S.~Wang and G.~Kreiss}, {\em Convergence of summation--by--parts finite
  difference methods for the wave equation}, J. Sci. Comput., 71 (2017),
  pp.~219--245.

\bibitem{Wang2018b}
{\sc S.~Wang, A.~Nissen, and G.~Kreiss}, {\em Convergence of finite difference
  methods for the wave equation in two space dimensions}, Math. Comp., 87
  (2018), pp.~2737--2763.

\bibitem{wang2018fourth}
{\sc S.~Wang and N.~A. Petersson}, {\em Fourth order finite difference methods
  for the wave equation with mesh refinement interfaces}, SIAM J. Sci. Comput.,
  41 (2019), pp.~A3246--A3275.

\end{thebibliography}
\bibliographystyle{siamplain}

\end{document}